\documentclass[11pt]{article}
\usepackage[dvips]{epsfig}
\usepackage{amsmath,amssymb,euscript,graphicx,amsfonts,verbatim}
\usepackage{enumerate,color}
\usepackage{graphicx}

\usepackage{graphicx}

\usepackage[colorlinks=true,linkcolor=blue,urlcolor=blue,citecolor=blue]{hyperref}

\usepackage{enumitem}

\textheight 224 true mm
\textwidth 163 true mm
\voffset=-15mm
\hoffset=-18mm

\newtheorem{theorem}{Theorem}[section]
\newtheorem{proposition}[theorem]{Proposition}
\newtheorem{lemma}[theorem]{Lemma}

\newtheorem{definition}[theorem]{Definition}

\newtheorem{notation}[theorem]{Notation}

\newenvironment{proof}{{\noindent \sc Proof.}}{\hfill $\Qed$\\}

\newcommand{\la}{\langle}
\newcommand{\ra}{\rangle}

\def\GL{\hbox{\rm GL}}

\newcommand{\Qed}{\rule{2.5mm}{3mm}}

\newcommand{\Cay}{\hbox{{\rm Cay}}}

\newcommand{\Ga}{\Gamma}
\newcommand{\bfv}{\mbox{\boldmath $v$}}

\newcommand{\irr}[1]{\operatorname{Irr}(#1)}

\newcommand{\ZZ}{\mathbb{Z}}

\newcommand{\CC}{\mathbb{C}}

\newcommand{\mb}{\mathbf}

\newcounter{case}

\renewcommand{\thecase}{\arabic{case}}

\newcounter{subcase}

\numberwithin{subcase}{case}

\begin{document}


\begin{center}
{\bf\Large On closed distance magic circulants of valency up to $5$} \\ [+4ex]
Blas Fernández{\small$^{a,b}$},  
Roghayeh Maleki{\small$^{a,b,}$\footnote{Corresponding author e-mail:~roghayeh.maleki@famnit.upr.si}},  
\v Stefko Miklavi\v c{\small$^{a, b, c}$}
\\
Andriaherimanana Sarobidy Razafimahatratra{\small$^{a, b}$} 
\\ [+2ex]
{\it \small 
$^a$University of Primorska, UP IAM, Muzejski trg 2, 6000 Koper, Slovenia\\
$^b$University of Primorska, UP FAMNIT, Glagolja\v ska 8, 6000 Koper, Slovenia\\
$^c$IMFM, Jadranska 19, 1000 Ljubljana, Slovenia}
\end{center}


\begin{abstract}
Let $\Ga=(V,E)$ be a graph of order $n$. A {\em closed distance magic labeling} of  $\Ga$ is a bijection $\ell : V \to \{1,2, \ldots, n\}$ for which there exists a positive integer $r$ such that $\sum_{x \in N[u]} \ell(x) = r$ for all vertices $u \in V$, where $N[u]$  is the closed neighborhood of $u$. A graph is said to be {\em closed distance magic} if it admits a closed distance magic labeling. 

In this paper, we classify all connected closed distance magic circulants with valency at most $5$, that is, Cayley graphs $\Cay(\ZZ_n;S)$ where $|S| \le 5$ and $S$ generates $\ZZ_n$. 
\end{abstract}

\begin{quotation}
\noindent {\em Keywords:} 
closed distance magic labeling, circulant graphs, 
eigenvalues.
\end{quotation}

\begin{quotation}
\noindent 
{\em Math. Subj. Class.:} 05C78, 05C25, 05C50.
\end{quotation}


\section{Introduction}
\label{sec:intro}
\noindent

All graphs considered in this paper are finite, simple and undirected.  A {\em distance magic labeling} of a graph $\Gamma$ of order $n$ is a bijective labeling of vertices of $\Gamma$ with positive integers $1, 2, \ldots, n$, such that the sum of the labels of the neighbors of a vertex does not depend on a given vertex. In such a case this sum is called the {\em magic constant} of the graph in question and the graph itself is said to be {\em distance magic}. The survey~\cite{AruFroKam11} gathers most of the results on distance magic graphs prior to 2010 (but see also~\cite{Gal_sur} for some of the more recent results). It is well known that the valency of a regular distance magic graph must be even. The obvious fact that the only distance magic cycle is the $4$-cycle thus led Rao~\cite{Rao08} to propose the problem of characterizing all tetravalent distance magic graphs. 

Cayley graphs (over certain group $G$) appear as a natural examples of regular graphs. Cayley graphs were extensively studied, as they enable to encode the abstract structure of a group. If group $G$ is cyclic, then a Cayley graph over $G$ is called a {\em circulant}. 

Tetravalent distance magic circulant graphs were first studied in \cite{CicFro16}, where a partial classification of these graphs was given. A complete classification of these graphs was later given in \cite{miklavivc2021classification}. Miklavi\v{c} and \v{S}parl also studied distance magic circulant graphs with valency $6$, and obtained a partial classification of these graphs, see \cite{MS6}.

A related concept of distance magic graphs is the one of {\em closed distance magic} graphs.  A {\em closed distance magic labeling} of a graph $\Gamma$ of order $n$ is a bijective labeling of vertices of $\Gamma$ with positive integers $1, 2, \ldots, n$, such that for all vertices in the graph, the sum of the labels of the neighbors of a fixed vertex including  the label of the vertex itself is independent of the choice of the given vertex. In such a case this sum is called the {\em closed magic constant} of the graph in question and the graph itself is said to be {\em closed distance magic}. The investigation of closed distance magic circulant graphs was initiated in \cite{Simanjuntak2013MagicLO} and later continued in \cite{anholcer2016spectra}, in which results about closed distance magic circulant graphs with specific connection sets $S$ were proven. 

In this paper we study connected closed distance magic circulant graphs with valency at most $5$. It is easy to see that the only connected closed distance magic circulant graphs with valency $1$ or $2$ are the complete graphs $K_2$ and $K_3$, respectively. Therefore, we concentrate on circulant graphs with valencies $3$, $4$, and $5$. The main results of this paper are the following two theorems.

\begin{theorem}
	\label{thm:val3,4}
	Let $\Gamma$ denote a connected circulant graph with valency $3$ or $4$. Then, $\Gamma$ is closed distance magic if and only if $\Gamma$ is isomorphic either the complete graph $K_4$ or to the complete graph $K_5$.
\end{theorem}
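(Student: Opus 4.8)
The plan is to dispose of the ``if'' direction at once and concentrate on the converse. For the ``if'' direction, observe that $K_4\cong\Cay(\ZZ_4;\{1,2,3\})$ and $K_5\cong\Cay(\ZZ_5;\{1,2,3,4\})$ are connected circulants of valency $3$ and $4$, and that every complete graph $K_m$ is closed distance magic: $N[u]=V(K_m)$ for every $u$, so $\sum_{x\in N[u]}\ell(x)=\binom{m+1}{2}$ for any bijection $\ell$. For the converse, let $\Gamma=\Cay(\ZZ_n;S)$ be connected, closed distance magic, of valency $k=|S|\in\{3,4\}$, with labeling $\ell$ (viewed as a vector indexed by $\ZZ_n$) and closed magic constant $r$. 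Double counting $\sum_u\sum_{x\in N[u]}\ell(x)$ gives $r=(k+1)(n+1)/2$, so for $k=4$ the order $n$ is odd. With $A$ the adjacency matrix and $\mathbf 1$ the all-ones vector, the relations $(A+I)\mathbf 1=(k+1)\mathbf 1$, $(A+I)\ell=r\mathbf 1$ and $r=(k+1)(n+1)/2$ yield $(A+I)\ell'=0$ for $\ell':=\ell-\tfrac{n+1}{2}\mathbf 1$; moreover $\ell'\neq 0$ since $\ell$ is a bijection and $n\ge k+1\ge 4$. Thus $-1$ is an eigenvalue of $\Gamma$. As $A$ is a circulant with eigenvalues $\lambda_t=\sum_{s\in S}\zeta^{ts}$ ($\zeta=e^{2\pi i/n}$) and eigenvectors $v_t=(\zeta^{tx})_{x\in\ZZ_n}$, we obtain $\ell'\in\mathrm{span}_{\CC}\{v_t:t\in J\}$, where $J:=\{t\in\ZZ_n:\lambda_t=-1\}\neq\emptyset$; also $\ell'$ is real with $n$ pairwise distinct coordinates $1-\tfrac{n+1}{2},\dots,n-\tfrac{n+1}{2}$.

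The heart of the proof is to show that every $t\in J$ is divisible by $m_0:=n/(k+1)$. Granting this, write $t=m_0q$; then $v_t(x)=(\zeta^{m_0})^{qx}=e^{2\pi iqx/(k+1)}$ depends only on $x\bmod(k+1)$, so the same holds for $\ell'$, which therefore takes at most $k+1$ distinct values. Since its $n$ coordinates are distinct, $n\le k+1$, and a $k$-regular graph on $k+1$ vertices is $K_{k+1}$; hence $\Gamma\cong K_4$ or $K_5$. To prove the divisibility it suffices to show $tS\subseteq\langle m_0\rangle$: as $S$ generates $\ZZ_n$ we then have $\langle tS\rangle=t\ZZ_n=\langle\gcd(t,n)\rangle\subseteq\langle m_0\rangle$, so $m_0\mid\gcd(t,n)\mid t$.

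Fix $t\in J$ and rewrite $\lambda_t=-1$ as the vanishing sum $1+\sum_{s\in S}\zeta^{ts}=0$. If $k=3$, then $n$ is even, $S=\{a,-a,n/2\}$, and $\lambda_t=2\cos(2\pi ta/n)+(-1)^t=-1$ forces $\cos(2\pi ta/n)\in\{0,-1\}$, i.e.\ $\zeta^{ta}\in\{i,-i,-1\}$; a short divisibility check (for instance $\zeta^{ta}=-1$ together with $\lambda_t=-1$ forces $t$ even, and then $ta\equiv n/2$ forces $4\mid n$) shows $4\mid n$ and $ta\in\{n/4,3n/4,n/2\}\subseteq\langle n/4\rangle=\langle m_0\rangle$, while $t\cdot(n/2)\in\langle n/2\rangle\subseteq\langle m_0\rangle$; hence $tS\subseteq\langle m_0\rangle$. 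If $k=4$, then $n$ is odd, $S=\{a,-a,b,-b\}$, and we appeal to the theory of vanishing sums of roots of unity. First, the five summands $1,\zeta^{ta},\zeta^{-ta},\zeta^{tb},\zeta^{-tb}$ are pairwise distinct: any coincidence (using that $\zeta^{ta}=\zeta^{-ta}$ forces $\zeta^{ta}=1$ since $n$ is odd) collapses $\lambda_t$ to $4\cos(2\pi tc/n)$ or $2+2\cos(2\pi tc/n)$ for some $c$, and then $\lambda_t=-1$ would make $\cos$ of a rational multiple of $2\pi$ equal $-1/4$ or $-3/2$, contradicting Niven's theorem (the only rational values of $\cos$ at rational multiples of $2\pi$ are $0,\pm\tfrac12,\pm1$). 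Second, no proper subsum of $1+\zeta^{ta}+\zeta^{-ta}+\zeta^{tb}+\zeta^{-tb}=0$ vanishes: a length-$1$ subsum is a nonzero root of unity, a length-$2$ subsum would make $-1$ an $n$-th root of unity (impossible for $n$ odd), and complementation then excludes lengths $3$ and $4$. Hence this is a \emph{minimal} vanishing sum of five roots of unity; by the classical structure of minimal vanishing sums, its length $5$ divides $n$ and, as $1$ occurs among the summands, the summands are exactly the fifth roots of unity $1,\zeta_5,\dots,\zeta_5^4$ with $\zeta_5=\zeta^{n/5}$. Therefore $\{ta,-ta,tb,-tb\}=\{n/5,2n/5,3n/5,4n/5\}$, so $tS\subseteq\langle n/5\rangle=\langle m_0\rangle$.

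The main obstacle is the valency-$4$ case: converting the scalar equation $\lambda_t=-1$ into a genuine vanishing sum of \emph{distinct} roots of unity, and exploiting the parity constraint ``$n$ odd'' — extracted from the integrality of $r$ — to eliminate length-$2$ subsums and thereby pin the vanishing sum down to the canonical fifth-roots configuration. The degenerate possibilities (coincidences among the summands, or $J=\emptyset$, i.e.\ $-1\notin\mathrm{Spec}(\Gamma)$) are ruled out by Niven's theorem and by $\ell'\neq 0$ respectively.
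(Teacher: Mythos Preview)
Your proof is correct, and for the cubic case it is essentially the paper's argument: both analyse $\chi_t(S)=-1$ case by case on the parity of $t$, locate $ta$ inside $\langle n/4\rangle$, and finish via connectedness. Your ``divisibility check'' skips the sub-case $\zeta^{ta}=\pm i$ with $t$ odd, but that case yields $4ta=n\cdot(\text{odd})$, which is incompatible with $n\equiv 2\pmod 4$, so $4\mid n$ follows there as well; you might want to say this explicitly.

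For the tetravalent case the two routes genuinely diverge. The paper rewrites $\cos(2\pi ja/n)+\cos(2\pi jb/n)=-\tfrac12$ as $\cos(\pi/3)+\cos(2\pi ja/n)+\cos(2\pi jb/n)=0$ and invokes the Coxeter--Crosby classification of rational solutions to $\cos r_1\pi+\cos r_2\pi+\cos r_3\pi=0$; the two infinite families are eliminated by the parity constraint $n$ odd, leaving only the exceptional fifth-root solution. You instead treat $1+\zeta^{ta}+\zeta^{-ta}+\zeta^{tb}+\zeta^{-tb}=0$ directly as a vanishing sum of $n$-th roots of unity, use Niven's theorem to force the five summands to be distinct, use $n$ odd to forbid length-$2$ subsums, and then appeal to the Conway--Jones/Mann classification of minimal vanishing sums (which for weight~$5$ leaves only a rotated copy of the fifth roots). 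The endgames also differ: the paper deduces $5a\equiv 5b\equiv 0$ in $\ZZ_n$ via Proposition~\ref{prop:eigenspace} and then uses connectedness of $S$ to force $n=5$, whereas you push connectedness through $\langle tS\rangle=t\ZZ_n$ to get $m_0\mid t$ for every admissible $t$, and then read off $n\le k+1$ from the periodicity of the eigenvectors $\bfv_t$. Your framing is pleasantly uniform across the two valencies and avoids the somewhat ad hoc insertion of $\cos(\pi/3)$; the paper's approach has the advantage of being entirely self-contained, since the Coxeter--Crosby solution list is quoted in full in Subsection~\ref{subsec:trig}, whereas your argument imports two external results (Niven and the weight-$5$ classification of minimal vanishing sums).
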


\begin{theorem}
		\label{thm:val5}
	Let $\Gamma$ denote a connected circulant graph with valency $5$. Then $\Gamma$ is closed distance magic if and only if $\Gamma$ is isomorphic to $\Cay(\ZZ_n;\{\pm 1, \pm c, n/2\})$ with $n$ even and $1 < c < n/2$, and one of the following (i)--(iv) holds:
	\begin{itemize}
		\item[(i)] 
		$c=n/2-1$;
		\item[(ii)] 
		$n \equiv 2 \pmod{4}$, $c$ is even, and $2(c^2-1)$ is an odd multiple of $n$;
		\item[(iii)] 
		$n=3 \cdot 2^t(6k+(-1)^t)$ and $c=2^{t-1}(6k+(-1)^t)-1$ for some integer $t \ge 2$ and some integer $k \ge 0$ such that $c \ge 2$;
		\item[(iv)] 
		$n=3 \cdot 2^t(6k-(-1)^t)$ and $c=2^{t-1}(6k-(-1)^t)+1$ for some integer $t \ge 2$ and some integer $k \ge 0$ such that $c \ge 2$.
	\end{itemize}
\end{theorem}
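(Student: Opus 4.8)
The plan is to combine the spectral characterization of closed distance magic graphs with an explicit computation of the $(-1)$-eigenspace of the circulant, and then to decide, family by family, when the resulting rigid affine space of candidate labelings actually contains a bijection onto $\{1,\dots,n\}$. First I would reduce to the stated normal form: as the valency $5$ is odd, $S$ contains the unique involution of $\ZZ_n$, so $n$ is even and $n/2\in S$; writing $S=\{\pm a,\pm b,n/2\}$, connectivity gives $\gcd(a,b,n/2)=1$, and the multiplier/isomorphism reductions for circulants recorded earlier in the paper bring $\Gamma$ to $\Cay(\ZZ_n;\{\pm 1,\pm c,n/2\})$ with $n$ even and $1<c<n/2$ (the values $c=1,n/2$ lower the valency), the remaining connection sets being disposed of by showing directly that they cannot be closed distance magic. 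I then invoke the spectral criterion from the preliminaries: a connected $k$-regular graph $\Gamma$ of order $n$ with adjacency matrix $A$ is closed distance magic if and only if $-1\in\mathrm{Spec}(A)$ and the affine subspace $\tfrac{n+1}{2}\mathbf{1}+\ker(A+I)$ contains a bijection $V\to\{1,\dots,n\}$; for $k=5$ the closed magic constant is necessarily $r=3(n+1)$.

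The heart of the argument is to determine $\ker(A+I)$. The eigenvalues are $\lambda_j=2\cos\tfrac{2\pi j}{n}+2\cos\tfrac{2\pi cj}{n}+(-1)^j$, and $\lambda_j=-1$ is equivalent to the vanishing of a sum of six roots of unity, $1+\omega^{j}+\omega^{-j}+\omega^{cj}+\omega^{-cj}+\omega^{jn/2}=0$ with $\omega=e^{2\pi i/n}$. Applying the classification of vanishing sums of roots of unity (a sum of six splits into blocks of sizes $2+2+2$ or $3+3$, the blocks being rotates of $\{1,-1\}$ and of $\{1,\zeta_3,\zeta_3^{2}\}$) gives an explicit description of $J:=\{j:\lambda_j=-1\}$: an odd $j$ lies in $J$ exactly when $(c-1)j\equiv n/2$ or $(c+1)j\equiv n/2\pmod n$, while an even $j$ lies in $J$ only if $6\mid n$, $3\nmid c$ and $j\in\{n/3,2n/3\}$. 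Pairing $j$ with $n-j$ yields a concrete real basis of $\ker(A+I)$, and in particular a congruence criterion for $-1\in\mathrm{Spec}(A)$.

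Next I would exploit the splitting of any candidate $\ell$ under translation by $n/2$: writing $\ell=\tfrac{n+1}{2}\mathbf{1}+g_{+}+g_{-}$ with $g_{+}$ invariant and $g_{-}$ anti-invariant under $x\mapsto x+n/2$, $g_{+}$ is supported on the even modes $n/3,2n/3$ and $g_{-}$ on the odd modes in $J$. Hence $\ell(x)+\ell(x+n/2)$ depends only on $x\bmod 3$ (so it takes at most three values), while $\ell(x)-\ell(x+n/2)$ is confined to a small, explicitly described $\ZZ$-module of waves of period dividing $\gcd(c\pm 1,n)$ (up to a fixed character twist). When $c=n/2-1$ every odd $j$ lies in $J$, so $\ker(A+I)$ contains all functions anti-invariant under translation by $n/2$; taking $\ell$ with $\ell(x+n/2)=n+1-\ell(x)$ then always produces a closed distance magic labeling, which is case (i). In the remaining cases $J$ is small, and the requirement that the pairs $\{\ell(x),\ell(x+n/2)\}$ exhaust $\{1,\dots,n\}$ becomes a rigid Diophantine condition that I expect to reduce exactly to (ii)--(iv): the congruence ``$2(c^2-1)$ is an odd multiple of $n$'' together with the parity of $c$ (case (ii), which is precisely when $n/2\in J$, i.e.\ $c$ even and $n\equiv 2\pmod 4$), and the $2$-adic/mod-$3$ shapes of (iii)--(iv) coming from the interaction between the $n/3$-mode and the modes with $(c\pm 1)j\equiv n/2$ when $c\pm 1$ generates the order-$6$ subgroup of $\ZZ_n$, i.e.\ $c=n/6\mp 1$.

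The main obstacle is this last step in both directions. For parameters outside (i)--(iv) one must rule out any bijection inside the very constrained space $\tfrac{n+1}{2}\mathbf{1}+\ker(A+I)$, presumably via parity and counting obstructions --- comparing sums such as $\sum_x\ell(x)^2$, or analysing the difference function $h(x)=\ell(x)-\ell(x+n/2)$, which must satisfy $h(x-1)+h(x+1)+h(x-c)+h(x+c)=0$ and $h(x+n/2)=-h(x)$ while still arising from a genuine permutation. Conversely, one must exhibit explicit closed distance magic labelings in families (iii) and (iv), whose peculiar shape $n=3\cdot 2^{t}(6k\pm(-1)^t)$ suggests a carefully layered, essentially $2$-adically recursive construction; arranging this construction to be simultaneously a bijection and a solution of the magic equation is where the delicate work lies.
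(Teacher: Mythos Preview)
Your spectral set-up and the classification of the admissible indices $J=\{j:\lambda_j=-1\}$ into ``odd'' solutions of $(c\pm 1)j\equiv n/2\pmod n$ and ``even'' solutions $j\in\{n/3,2n/3\}$ is essentially the paper's decomposition into types $3^{\pm}$ and type $2$ (the paper reaches it via Crosby's trigonometric equation rather than vanishing sums of roots of unity, and has to separately eliminate a spurious ``type $1$'' for even $j$, but the outcome is the same). Where your plan diverges, and where the gaps lie, is in how this information is used.

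\textbf{Necessary direction.} You propose to sit inside $\tfrac{n+1}{2}\mathbf{1}+\ker(A+I)$ and rule out bijections by parity/counting obstructions on $\ell(x)\pm\ell(x+n/2)$; you yourself flag this as the ``main obstacle'', and indeed the paper does not attempt anything of that sort. The paper's key shortcut is Proposition~\ref{prop:eigenspace}: if there exist distinct $x,y\in\ZZ_n$ with $\chi_j(x)=\chi_j(y)$ for every admissible $j$, then $\Gamma$ is not CDM. Equivalently, CDM forces the admissible characters to separate all points of $\ZZ_n$. This turns the problem into pure arithmetic on $J$: one shows that every admissible $j$ is a multiple of a certain $m$ (so $\chi_j(n/m)=1$ for all $j\in J$), hence $m=1$, which pins down $n$ and $c$. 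Concretely, the paper writes $n=2^t\ell$, $c\pm 1=2^{\alpha,\beta}\ell_{1,2}$, extracts the odd parts $d_i=\gcd(\ell,\ell_i)$, and repeatedly applies this separation criterion to force $\ell=d_1d_2$, then $d_1=3$ or $d_2=3$, etc., arriving at (ii)--(iv). Your symmetric/antisymmetric splitting is correct but does not by itself produce these divisibility constraints; without the separation criterion you would have to analyse actual bijections, which is the hard route you are worried about.

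\textbf{Normalisation.} The reduction to $a=1$ is not just ``multiplier reductions recorded earlier'': it requires showing that if $\Gamma$ is CDM then at least one of $a,b$ is coprime to $n$. Connectivity only gives $\gcd(a,b,n/2)=1$, which is weaker. The paper proves this (Lemma~\ref{lem:one}(viii)) precisely via the separation criterion above, by showing that any prime $p\mid\gcd(a,n)$ would divide every admissible $j$.

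\textbf{Sufficient direction.} For (iii) and (iv) the construction is neither recursive nor $2$-adic: one labels the six cosets of the order-$6$ subgroup $H=\langle n/6\rangle$ by a fixed pattern, namely on the coset $3k+H$ the values $1+3k,\,3+3k,\,2+3k,\,n-1-3k,\,n-2-3k,\,n-3k$ in a prescribed order. The point is that $c=n/6\mp 1$, so $x\pm 1$ and $x\mp c$ always lie in the same $H$-coset, and one checks directly that each such pair contributes $n+2$ while $\ell(x)+\ell(x+n/2)=n-1$. For (ii) the paper simply observes that the tetravalent distance magic labeling of $\Cay(\ZZ_n;\{\pm 1,\pm c\})$ from \cite{miklavivc2021classification} already satisfies $\ell(x)+\ell(x+n/2)=n+1$, hence is closed distance magic for the $5$-valent graph as well. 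These explicit labelings are what is missing from your proposal.
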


 \section{Preliminaries}\label{sec:prelim}
 \noindent
 
 In this section we review basic definitions and results regarding closed distance magic graphs. 
 
 \subsection{Closed distance magic graphs and Cayley graphs}

Let $\Gamma=(V,E)$ denote a graph with vertex set $V$ and edge set $E$. Let $n=|V|$ denote the order of $\Gamma$. If vertices $x,y \in V$ are adjacent, then we denote this by $x\sim y$. For any $x\in V$, the \emph{open neighborhood} of $x$ is the set $N_{\Gamma}(x) := \left\{ y \in V :\ x\sim y \right\}$.
The \emph{closed neighborhood} of $x$ is the set $N_{\Gamma}[x] := N_{\Gamma}(x) \cup \left\{x\right\}$. We abbreviate $N_{\Gamma}(x)=N(x)$ and $N_{\Gamma}[x]=N[x]$ when $\Gamma$ is clear from the context. A bijective  labeling $\ell: V \to \{1,2, \ldots, n\}$ is called \emph{closed distance magic}, if  the number
\begin{align}
	\label{eq0}
	r = \sum_{y\in N[x]} \ell(y)
\end{align}
is independent of  the vertex $x\in V$. If $\Gamma$ admits a closed distance magic labeling, then we say that $\Gamma$ is \emph{closed distance magic (CDM)}. In this case we refer to the number $r$ from \eqref{eq0} as \emph{closed magic constant} of $\Gamma$.

In this paper, we will study close distance magic Cayley graphs of cyclic groups. Let $G$ be a finite group and let $S$ be an inverse-closed subset of $G$, which does not contain the identity element of $G$. Recall that the {\em Cayley graph} $\Cay(G;S)$ of $G$ with respect to the {\em connection set} $S$ is the graph with vertex-set $G$ in which $g, h \in G$ are adjacent if and only if $h = gs$ for some $s \in S$. Moreover, the graph $\Cay(G;S)$ is regular with valency $|S|$ and is connected if and only if $\la S \ra = G$. In the case that the group $G$ is cyclic the graph $\Cay(G;S)$ is called a {\em circulant}.  For any $n\geq 2$, the cyclic group $\ZZ_n$ is the group consisting of all congruence classes of $\ZZ$ modulo $n$. By abuse of notation, we will consider the elements of $\ZZ_n$ to be the numbers $\{0,1,\ldots,n-1\}$ and we will take the remainder modulo $n$, whenever it is needed in our computations.

\subsection{Regular closed distance magic graphs}

In this subsection we assume the graph $\Gamma$ is regular with valency $\kappa$. We recall certain results that  link the property of being closed distance magic to eigenvectors for the (potential) eigenvalue $-1$ of the corresponding adjacency matrix. Let $A$ denote the adjacency matrix of $\Ga$. The {\em eigenvalues of} $\Ga$ are the eigenvalues of its adjacency matrix $A$. Similarly, the \emph{spectrum} of $\Ga$ is the spectrum of its adjacency matrix $A$, that is, the multiset consisting of all of its eigenvalues.

\begin{proposition}
	\label{constant r}
	(\cite[Observation 2.1 and Corollary 2.3]{anholcer2016spectra}) Let $\Gamma$ denote a $\kappa$-regular graph on $n$ vertices. Assume $\Gamma$ is closed distance magic with closed magic constant $r$. Then $r=\frac{(\kappa+1)(n+1)}{2}$ and $-1$ is an eigenvalue of $\Gamma$.
\end{proposition}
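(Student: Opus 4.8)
The plan is to prove both assertions by a short counting and eigenvector argument. First I would fix a closed distance magic labeling $\ell : V \to \{1,2,\ldots,n\}$ of the $\kappa$-regular graph $\Gamma$, and simply sum the defining equation $r = \sum_{y \in N[x]} \ell(y)$ over all $x \in V$. On the left we get $rn$. On the right, each vertex $y$ contributes $\ell(y)$ once for every $x$ with $y \in N[x]$; since $\Gamma$ is $\kappa$-regular, $y$ lies in exactly $\kappa+1$ closed neighborhoods (its own and those of its $\kappa$ neighbors). Hence the right-hand side equals $(\kappa+1)\sum_{y \in V}\ell(y) = (\kappa+1)\cdot\frac{n(n+1)}{2}$, using that $\ell$ is a bijection onto $\{1,\ldots,n\}$. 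Equating the two expressions and dividing by $n$ gives $r = \frac{(\kappa+1)(n+1)}{2}$, as claimed.

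For the second assertion, I would reformulate the closed distance magic condition in matrix form. Let $A$ be the adjacency matrix of $\Gamma$, let $\mathbf{1}$ be the all-ones vector, and regard $\ell$ as a vector in $\RR^n$. Then $\sum_{y \in N[x]}\ell(y)$ is precisely the $x$-th entry of $(A+I)\ell$, so the labeling being closed distance magic says exactly that $(A+I)\ell = r\mathbf{1}$. Now consider the vector $v := \ell - \frac{n+1}{2}\mathbf{1}$; since the entries of $\ell$ are a permutation of $1,\ldots,n$, this vector is nonzero (indeed its entries are the numbers $\pm\frac12,\pm\frac32,\ldots$ centered at $0$, and in particular $v \neq 0$) and moreover $\mathbf{1}^\top v = \sum_{i=1}^n i - n\cdot\frac{n+1}{2} = 0$, so $v$ is orthogonal to $\mathbf{1}$. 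Applying $A+I$ and using $A\mathbf{1} = \kappa\mathbf{1}$ together with the first part, we compute
\[
(A+I)v = (A+I)\ell - \frac{n+1}{2}(A+I)\mathbf{1} = r\mathbf{1} - \frac{n+1}{2}(\kappa+1)\mathbf{1} = r\mathbf{1} - r\mathbf{1} = 0,
\]
so $Av = -v$ with $v \neq 0$; that is, $-1$ is an eigenvalue of $A$, hence of $\Gamma$.

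The argument is essentially self-contained and there is no real obstacle; the only point requiring a word of care is checking that $v \neq 0$, which is where the bijectivity of $\ell$ (not merely the value of the sum of its labels) is used. I would remark that this also pins down the eigenvector: any closed distance magic labeling yields, after the affine shift, an eigenvector for the eigenvalue $-1$ lying in the orthogonal complement of $\mathbf{1}$, and this is the observation that will be exploited in the sequel when $\Gamma$ is a circulant and the eigenvectors of $A$ are the characters of $\ZZ_n$.
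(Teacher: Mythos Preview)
Your proof is correct. Note that the paper does not actually give its own proof of this proposition; it simply cites it from \cite{anholcer2016spectra}. That said, your argument is exactly in the spirit of the paper's proof of the next result, Theorem~\ref{-1evalue}, where the same shifted vector $\ell(x)-\tfrac{n+1}{2}$ is constructed and shown to lie in the kernel of $A+I$; so your approach is fully consistent with the surrounding material.
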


The next theorem gives us a characterization of closed distance magic regular graphs in terms of their eigenvalues and eigenvectors.

\begin{theorem}\label{-1evalue}
	Let $\Gamma$ denote a $k$-regular graph on $n$ vertices and let $A$ denote its adjacency matrix. Then $\Gamma$ is closed distance magic if and only if there exists an eigenvector $\bfv$  of $A$ with eigenvalue $-1$, such that a certain permutation of its entries results in the following arithmetic sequence:
	\begin{equation}
		\label{eq1}
		\frac{1-n}{2}, \cdots,  \frac{2i-1-n}{2}, \cdots, \frac{n-1}{2}.
	\end{equation} 
	In particular, if $\Gamma$ is closed distance magic, then there exists an eigenvector corresponding to eigenvalue $-1$ for which all entries are pairwise distinct.  
\end{theorem}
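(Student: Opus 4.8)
The plan is to unwind the definition of a closed distance magic labeling in terms of the adjacency matrix and the all-ones vector. Write $J$ for the all-ones matrix, $\mathbf{j}$ for the all-ones vector, and let $\ell$ be a labeling thought of as a vector in $\RR^n$ whose entries are a permutation of $1,2,\dots,n$. The condition $\sum_{y\in N[x]}\ell(y)=r$ for all $x$ says precisely that $(A+I)\ell = r\,\mathbf{j}$. So $\Gamma$ is closed distance magic if and only if there is a permutation of the entries of the vector $(1,2,\dots,n)^{\top}$ — call the permuted vector $\ell$ — with $(A+I)\ell = r\,\mathbf{j}$ for some positive integer $r$. First I would observe, using Proposition~\ref{constant r}, that if such an $\ell$ exists then necessarily $r=\frac{(\kappa+1)(n+1)}{2}$ where here $k=\kappa$ is the valency; this pins down $r$ so that we may work with a fixed constant throughout.

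The next step is the translation trick: subtract a suitable multiple of $\mathbf{j}$ to center the labels around $0$. Set $\bfv = \ell - \frac{n+1}{2}\mathbf{j}$. Then the entries of $\bfv$ are exactly the arithmetic sequence displayed in~\eqref{eq1}, namely $\frac{1-n}{2},\frac{3-n}{2},\dots,\frac{n-1}{2}$, up to the same permutation. Now compute $(A+I)\bfv = (A+I)\ell - \frac{n+1}{2}(A+I)\mathbf{j}$. Since $\Gamma$ is $k$-regular we have $A\mathbf{j}=k\mathbf{j}$, hence $(A+I)\mathbf{j}=(k+1)\mathbf{j}$; combining this with $(A+I)\ell = r\,\mathbf{j}$ and the value $r=\frac{(k+1)(n+1)}{2}$ gives $(A+I)\bfv = r\,\mathbf{j} - \frac{(n+1)(k+1)}{2}\mathbf{j} = \mathbf{0}$. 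Thus $A\bfv = -\bfv$, i.e. $\bfv$ is an eigenvector of $A$ for the eigenvalue $-1$ (it is nonzero because its entries are pairwise distinct, $n\ge 2$). This proves the forward implication.

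For the converse, suppose $\bfv$ is an eigenvector of $A$ with $A\bfv=-\bfv$ whose entries, in some order, form the sequence~\eqref{eq1}. Reverse the substitution: put $\ell = \bfv + \frac{n+1}{2}\mathbf{j}$. Then the entries of $\ell$, in that same order, are $1,2,\dots,n$, so $\ell$ is a bijective labeling $V\to\{1,\dots,n\}$. Moreover $(A+I)\ell = (A+I)\bfv + \frac{n+1}{2}(A+I)\mathbf{j} = \mathbf{0} + \frac{n+1}{2}(k+1)\mathbf{j}$, so $\sum_{y\in N[x]}\ell(y) = \frac{(k+1)(n+1)}{2}$ for every vertex $x$, which is a positive integer constant. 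Hence $\ell$ is a closed distance magic labeling and $\Gamma$ is closed distance magic. The final ``in particular'' clause is immediate, since the entries of the sequence~\eqref{eq1} are pairwise distinct.

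There is no real obstacle here; the only point requiring a little care is bookkeeping the constant $r$ and checking that $\frac{(k+1)(n+1)}{2}$ is indeed what the centering forces — one should note that $r$ being an integer is automatic once a labeling exists (by Proposition~\ref{constant r}), and is not needed for the converse direction beyond observing the resulting closed neighborhood sum is the fixed rational number $\frac{(k+1)(n+1)}{2}$, which is the same for every vertex.
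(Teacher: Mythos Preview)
Your proof is correct and follows essentially the same approach as the paper's own proof: both directions hinge on the translation $\bfv = \ell - \tfrac{n+1}{2}\,\mathbf{j}$ together with the regularity identity $(A+I)\mathbf{j}=(\kappa+1)\mathbf{j}$ to pass between a closed distance magic labeling and an eigenvector for $-1$ whose entries are a permutation of the sequence~\eqref{eq1}. The only cosmetic difference is that you phrase the computation in matrix--vector form throughout, whereas the paper works entrywise.
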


\begin{proof}
	Let $V$ denote the vertex set of $\Gamma$ and let $I$ denote the identity matrix of order $n$. 
	
	Suppose $\Gamma$ is closed distance magic with closed magic constant $r$ and with closed distance magic labeling $\ell: V \to \{1, 2,\cdots,n\}$. Recall that $r=(\kappa+1)(n+1)/2$. Let $\bfv$ be the column vector of $\CC^n$ whose rows are indexed by the elements of $V$ such that the $x$-entry of $\bfv$ is equal to
	\begin{equation}\label{eq2}
		\ell(x)-\frac{r}{\kappa+1}=\ell(x)-\frac{n+1}{2}.
	\end{equation}   
	Since $\Gamma$ is regular with valency $\kappa$ and $\ell$ is a closed distance magic labeling with closed magic constant $r$, it is easy to see the $x$-entry of $(A+I)\bfv=0$ for every $x\in V$. Therefore, it follows that $\bfv$ is an eigenvector of the adjacency matrix of $\Gamma$ with $-1$ as the corresponding eigenvalue. That a certain permutation of the entries of $\bfv$ results in \eqref{eq1} is clear from \eqref{eq2} above. This also shows that the entries of $\bfv$ are pairwise distinct. 
	
	Conversely, assume  that $-1$ is an eigenvalue of $A$ and there exists an eigenvector $\bfv$ for the eigenvalue $-1$ with the property that a certain permutation of its entries results in the arithmetic sequence given in \eqref{eq1}. Let $\ell: V \to \{1,2,\cdots,n\}$ be the mapping defined by $$\ell(x)=\bfv_x+\frac{n+1}{2}$$ where $\bfv_x$ denotes the $x$-entry of $\bfv$ for $x\in V$.  Note that the assumption on $\bfv$ implies that $\ell$ maps $V$ to $\{1,2, \cdots,n\}$ and this map is a bijection. Moreover, since for every $x\in V$ the $x$-entry of the column vector $(A+I)\bfv$ equals $0$, it follows that 
	$$
	   \displaystyle \sum_{y\in N\left[ x\right]}\ell(y) =
	   \sum_{y\in N\left[ x\right]} \Big( \bfv_y + \frac{n+1}{2} \Big) = \frac{(\kappa+1)(n+1)}{2}.
	   \displaystyle
	 $$
	Therefore, $\Ga$ is closed distance magic, and this completes the proof. 
\end{proof}

\subsection{Representation theory of $\ZZ_n$}
In this subsection, we will recall the irreducible representations of the cyclic group $\ZZ_n$.  Given a group $G$, recall that a representation of $G$ is a group homomorphism $\mathfrak{X}$ from group $G$ to $\GL_n(\CC)$, the general linear group of degree $n$ and over $\CC$, for some $n\geq 1$. This positive integer $n$ is called the \emph{dimension} of the representation. It is not hard to see that there is a correspondence between representations of $G$ and submodules of the group algebra $\CC G$ (see \cite{dummit2004abstract}). We say that a representation of $G$ is \emph{irreducible} if the corresponding submodule of $\CC G$ is irreducible, that is, its only submodules are itself or the trivial one. 

Given a representation $\mathfrak{X}$ of $G$, the corresponding \emph{character} is the map $\chi: G\to \CC$ such that $\chi(g) = \operatorname{Trace}(\mathfrak{X}(g))$, for any $g\in G$. If a representation  of $G$ is irreducible, then we say that the corresponding character is \emph{irreducible}. We will denote the set of all non-equivalent irreducible characters of $G$ by $\irr{G}$.

Now let us consider the representations of cyclic groups. It is a well-known fact in the representation theory of finite groups that irreducible representations of an abelian group are one-dimensional. Hence, representations and characters coincide for these types of groups. As cyclic groups are abelian, the irreducible characters of $\ZZ_n$ are all possible homomorphisms from $\ZZ_n$ to $\CC^*$, the multiplicative group of non-zero complex numbers. It is not hard to see that in fact an irreducible character of $\ZZ_n$ is a homomorphism $\ZZ_n \to \left\{ z\in \CC :\ z^n = 1 \right\}$. Let $\mathbf{i}$ be the complex number such that $\mathbf{i}^2 =-1$. For any $j\in \{ 0,1,\ldots,n-1 \}$, define the map
$$
	\chi_{j}: \ZZ_n  \to \left\{ z\in \CC :\ z^n = 1 \right\}, \qquad
	\chi_{j}(x) = \cos\left(\frac{2\pi xj}{n}\right) + \mathbf{i} \sin\left(\frac{2\pi xj}{n}\right).%
$$
It is straightforward that for $0\leq j\leq n-1$ the map $\chi_{j}$ is a representation of $\ZZ_n$, and that
\begin{align}
	\irr{\ZZ_n} &= \left\{ \chi_j :\ 0\leq j\leq n-1 \right\}.\label{eq:irr}
\end{align}

\subsection{Eigenvalues of Cayley graphs over cyclic groups}

Computing the spectrum of a graph is a hard problem in general, even for Cayley graphs. However, the spectra of  Cayley graphs over abelian groups can be determined using representation theory of the underlying group.  
\begin{lemma}
	\label{lem:eigenvalues-cayley-graphs}
	(\cite{babai1979spectra}) Let $G$ be  an abelian group with identity element $0$ and let $S \subset G\setminus \{0\}$ such that $-x\in S$, whenever $x\in S$. The eigenvalues of the Cayley graph $\Cay(G;S)$ are of the form
	\begin{align*}
		\chi(S) := \sum_{x\in S} \chi(x),
	\end{align*}
	where $\chi$ runs through all the elements of $\irr{G}$. If $G = \{ g_1 = 0,g_2,\ldots,g_n \}$, then the vector 
	\begin{align*}
		\bfv_\chi &= \left[\chi(0),\chi(g_2),\ldots,\chi(g_n)\right]^t
	\end{align*} 
	is an eigenvector of $\Cay(G;S)$ corresponding to the eigenvalue $\chi(S)$.
	The dimension of the eigenspace corresponding to an eigenvalue $\lambda$ of $\Cay(G;S)$ is 
		$\left| \left\{ \chi \in \irr{G}:\ \chi(S) = \lambda  \right\} \right|$.
\end{lemma}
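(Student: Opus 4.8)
The plan is to exhibit, for each irreducible character $\chi \in \irr{G}$, the explicit vector $\bfv_\chi$ asserted in the statement, verify by a direct computation that it is an eigenvector of $\Cay(G;S)$ with eigenvalue $\chi(S)$, and then use the linear independence of these $n$ vectors to conclude that they account for the entire spectrum and to read off the eigenspace dimensions.

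First I would set up the adjacency matrix $A$ of $\Cay(G;S)$ with rows and columns indexed by the elements of $G = \{g_1 = 0, g_2, \ldots, g_n\}$, so that $A_{g,h} = 1$ precisely when $h - g \in S$ and $A_{g,h} = 0$ otherwise. Since $S$ is inverse-closed, the relation $h-g \in S$ is symmetric, so $A$ is the adjacency matrix of a genuine undirected graph. The heart of the argument is the following computation. Fixing $\chi \in \irr{G}$ and recalling that $\chi$ is a homomorphism into $\CC^{*}$, so that $\chi(g + s) = \chi(g)\chi(s)$ for all $g, s \in G$, I would evaluate the $g$-entry of $A\bfv_\chi$:
$$
(A\bfv_\chi)_g \;=\; \sum_{\substack{h \in G \\ h - g \in S}} \chi(h) \;=\; \sum_{s \in S} \chi(g + s) \;=\; \chi(g)\sum_{s \in S}\chi(s) \;=\; \chi(S)\,\chi(g).
$$
Because this holds for every $g \in G$, we obtain $A\bfv_\chi = \chi(S)\,\bfv_\chi$; and since $\chi(0) = 1 \neq 0$ the vector $\bfv_\chi$ is nonzero, hence a genuine eigenvector for the eigenvalue $\chi(S)$. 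This already shows that every number $\chi(S)$ is an eigenvalue of $\Cay(G;S)$ and that $\bfv_\chi$ is a corresponding eigenvector.

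The final step, and the only point requiring a structural input beyond the one-line computation above, is to argue that the $n$ vectors $\{\bfv_\chi : \chi \in \irr{G}\}$ form a basis of $\CC^{n}$, so that no further eigenvalues occur and the eigenspace dimensions are exactly as claimed. For this I would invoke the orthogonality relations for the irreducible characters of the finite abelian group $G$: for $\chi, \psi \in \irr{G}$ the standard Hermitian inner product satisfies
$$
\langle \bfv_\chi, \bfv_\psi \rangle \;=\; \sum_{g \in G} \chi(g)\,\overline{\psi(g)} \;=\; \sum_{g \in G} (\chi\psi^{-1})(g),
$$
which equals $n$ when $\chi = \psi$ (each summand being $|\chi(g)|^2 = 1$) and equals $0$ when $\chi \neq \psi$, since $\chi\psi^{-1}$ is then a nontrivial character and a nontrivial character sums to zero over $G$. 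Thus the $\bfv_\chi$ are pairwise orthogonal and nonzero, hence linearly independent, and being $n$ in number they span $\CC^{n}$. Consequently $A$ is diagonalized by this basis, the full spectrum (with multiplicity) is exactly $\{\chi(S) : \chi \in \irr{G}\}$, and for any eigenvalue $\lambda$ the corresponding eigenspace is spanned by those $\bfv_\chi$ with $\chi(S) = \lambda$; as these remain linearly independent, its dimension is $|\{\chi \in \irr{G} : \chi(S) = \lambda\}|$, as required. I expect the eigenvector computation to be entirely routine; the only genuine obstacle, though itself standard, is the orthogonality argument that guarantees completeness of the eigenbasis and thereby upgrades "$\chi(S)$ are eigenvalues" to "$\chi(S)$ are all the eigenvalues, with the stated multiplicities."
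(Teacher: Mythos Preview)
Your argument is correct and is in fact the standard proof of this classical result. Note, however, that the paper does not supply its own proof of this lemma at all: it is stated with a citation to Babai's paper and used as a black box. So there is nothing to compare against; your write-up simply fills in a proof that the paper chose to omit.
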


Consequently, the eigenvalues of the circulant graph $\Cay(\ZZ_n;S)$ are of the form
\begin{align}
	\chi_j(S) := 
	 \sum_{x\in S} \left( \cos\left(\frac{2\pi xj}{n}\right) + \mathbf{i} \sin\left(\frac{2\pi xj}{n}\right)\right),
\end{align}
where $j$ runs through the elements of $\{0,1,\ldots,n-1\}$. For $0\leq j\leq n-1$, let 
\begin{align}
	\bfv_j :=\bfv_{\chi_j} = \left[1,\omega^j,\omega^{2j},\ldots,\omega^{j(n-1)}\right]^t,\label{eq:eigenvectors-general}
\end{align}
where $\omega =  \cos\left(\frac{2\pi }{n}\right) + \mathbf{i} \sin\left(\frac{2\pi }{n}\right)$.

Using Lemma~\ref{lem:eigenvalues-cayley-graphs}, we deduce that the set of vectors $\{\bfv_j :\ 0\leq j\leq n-1 \}$ is a complete set of basic eigenvectors. Using the above comments and the fact that $\chi_j(n/2)=(-1)^j$, we obtain the following result about the eigenvalues of circulant graphs.

\begin{lemma}
	\label{lem:eigenvalues}
	\begin{enumerate}[label=\rm(\roman*)]
		\item Let $n\geq 4$ be even and $1\leq a < \frac{n}{2}$. The eigenvalues of the cubic circulant graph $\Cay(\ZZ_n;\{\pm a, \frac{n}{2}\})$ are
		\begin{align*}
			2\cos\left( \frac{2ak\pi }{n} \right) + (-1)^k  \qquad (0 \le k \le n-1).
		\end{align*}
	
		\item Let $n\geq 5$ be an integer and $1\leq a < b < \frac{n}{2}$. The eigenvalues of the tetravalent circulant graph $\Cay(\ZZ_n;\{\pm a, \pm b \})$ are
		\begin{align*}
			2\cos\left( \frac{2ak\pi }{n} \right) +2\cos\left( \frac{2bk\pi }{n} \right)  \qquad (0 \le k \le n-1).
		\end{align*}
	
	\item Let $n\geq 6$ be even and $1\leq a < b<\frac{n}{2}$. The eigenvalues of the $5$-valent circulant graph $\Cay(\ZZ_n;\{\pm a, \pm b, \frac{n}{2} \})$ are
	\begin{align*}
		2\cos\left( \frac{2ak\pi }{n} \right) + 2\cos\left( \frac{2bk\pi }{n} \right) + (-1)^k  \qquad (0 \le k \le n-1).
	\end{align*}
		
		\end{enumerate} 
	
\end{lemma}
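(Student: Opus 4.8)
\emph{Proof proposal.} The plan is to apply Lemma~\ref{lem:eigenvalues-cayley-graphs} directly and then simplify the resulting character sums. By that lemma, the multiset of eigenvalues of $\Cay(\ZZ_n;S)$ is $\{\chi_k(S) :\ 0\le k\le n-1\}$, where $\chi_k(S)=\sum_{x\in S}\chi_k(x)$ and $\chi_k(x)=\cos\left(\tfrac{2\pi xk}{n}\right)+\mathbf{i}\sin\left(\tfrac{2\pi xk}{n}\right)$. Hence it suffices to evaluate $\chi_k(S)$ for each of the three connection sets appearing in (i)--(iii).

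First I would record two elementary identities. For any $x$ with $0<x<n/2$ (so that $x$ and $-x\equiv n-x$ are distinct elements of $\ZZ_n$, both different from $n/2$), the pair $\{x,-x\}\subseteq S$ contributes
\[
\chi_k(x)+\chi_k(-x)=2\cos\!\left(\frac{2\pi x k}{n}\right),
\]
since $\sin(-\theta)=-\sin(\theta)$ makes the imaginary parts cancel. For the involution $n/2$ (present when $n$ is even),
\[
\chi_k(n/2)=\cos(\pi k)+\mathbf{i}\sin(\pi k)=(-1)^k .
\]

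Next, in each case one checks that the connection set has the claimed cardinality, i.e.\ that $a$, $-a$, $b$, $-b$ and $n/2$ are pairwise distinct in $\ZZ_n$; this follows from $1\le a<b<n/2$ together with $2a,2b\not\equiv 0\pmod n$. Summing the contributions above then gives, for $\Cay(\ZZ_n;\{\pm a,\tfrac{n}{2}\})$ the value $2\cos\!\left(\tfrac{2ak\pi}{n}\right)+(-1)^k$; for $\Cay(\ZZ_n;\{\pm a,\pm b\})$ the value $2\cos\!\left(\tfrac{2ak\pi}{n}\right)+2\cos\!\left(\tfrac{2bk\pi}{n}\right)$; and for $\Cay(\ZZ_n;\{\pm a,\pm b,\tfrac{n}{2}\})$ the value $2\cos\!\left(\tfrac{2ak\pi}{n}\right)+2\cos\!\left(\tfrac{2bk\pi}{n}\right)+(-1)^k$. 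Letting $k$ range over $0,1,\ldots,n-1$ yields exactly the lists in (i)--(iii).

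There is no genuine obstacle here: the statement is essentially a specialization of Lemma~\ref{lem:eigenvalues-cayley-graphs} combined with the already-noted fact $\chi_j(n/2)=(-1)^j$. The only points requiring a little care are the bookkeeping that each connection set has the stated size (so that no summand is double-counted or omitted) and the sign convention in the cancellation of the imaginary parts.
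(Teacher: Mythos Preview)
Your proposal is correct and follows exactly the approach the paper uses: it simply specializes Lemma~\ref{lem:eigenvalues-cayley-graphs} to the three connection sets, pairing $\chi_k(x)+\chi_k(-x)=2\cos(2\pi xk/n)$ and using the observation $\chi_k(n/2)=(-1)^k$ that the paper records just before stating the lemma. The paper in fact does not write out a separate proof at all, treating the lemma as an immediate consequence of these remarks; your write-up is a slightly more explicit version of the same argument.
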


\subsection{Admissible characters for CDM circulants}
By Theorem~\ref{-1evalue}, we know that a regular closed distance magic graph $\Ga$ admits an eigenvalue $-1$ with a certain eigenvector. In case when $\Ga$ is a circulant graph, we have the following definition. 

\begin{definition}
	\label{def:adm}
	Let $n\geq 3$ and let $S$ be an inverse-closed subset of $\ZZ_n \setminus \{0\}$. Consider the graph $\Cay(\ZZ_n;S)$. If for some integer $0 \le j \le n-1$ we have that $\chi_j(S) = -1$, then we call integer $j$ (as well as the corresponding irreducible character $\chi_j$) admissible. Define
	\begin{align*}
		\mathcal{J}_n(S) := \{ j\in \{ 0,1,\ldots,n-1\} :\ \mbox{$j$ is admissible} \}.
	\end{align*}
\end{definition}

With reference to Definition \ref{def:adm}, recall that vector $\bfv_j$  from \eqref{eq:eigenvectors-general} is an eigenvector of $\Cay(\ZZ_n;S)$ corresponding to the eigenvalue $\chi_j(S)$. Recall also that the subspace
$\operatorname{Span}\{ \bfv_j :\ j\in \mathcal{J}_n(S) \}$
is the eigenspace corresponding to the eigenvalue $-1$ of  $\Cay(\ZZ_n;S)$. The following result will be crucial for the rest of this paper. 

\begin{proposition}
	Let $n\geq 3$ and let $S$ be an inverse-closed subset of $\ZZ_n \setminus \{0\}$. Consider the graph $\Cay(\ZZ_n;S)$. If there exist distinct $x,y \in \ZZ_n$ such that $\chi_j(x) = \chi_j(y)$ for all $j\in \mathcal{J}_n(S)$, then $\Cay(\ZZ_n;S)$ is not closed distance magic.\label{prop:eigenspace}
\end{proposition}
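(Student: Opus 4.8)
The plan is to prove the contrapositive: assuming $\Cay(\ZZ_n;S)$ is closed distance magic, I will show that no two distinct $x,y\in\ZZ_n$ can satisfy $\chi_j(x)=\chi_j(y)$ for all $j\in\mathcal{J}_n(S)$. By Theorem~\ref{-1evalue}, since $\Cay(\ZZ_n;S)$ is closed distance magic, $-1$ is an eigenvalue of its adjacency matrix and there is an eigenvector $\bfv$ for the eigenvalue $-1$ whose entries are pairwise distinct; I index the entries of $\bfv$ by the vertex set $\ZZ_n=\{0,1,\ldots,n-1\}$ and write $\bfv_x$ for the entry in position $x$.

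Next I would describe the $-1$-eigenspace explicitly. By Lemma~\ref{lem:eigenvalues-cayley-graphs} the vectors $\bfv_0,\bfv_1,\ldots,\bfv_{n-1}$ of \eqref{eq:eigenvectors-general} form a basis of $\CC^n$ consisting of eigenvectors, with $\bfv_j$ belonging to the eigenvalue $\chi_j(S)$; hence the eigenspace of $\Cay(\ZZ_n;S)$ for the eigenvalue $-1$ equals $\operatorname{Span}\{\bfv_j:\ j\in\mathcal{J}_n(S)\}$. In particular, $\bfv=\sum_{j\in\mathcal{J}_n(S)}c_j\bfv_j$ for suitable scalars $c_j\in\CC$.

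Finally I would read off a single coordinate. Since $\omega^{xj}=\cos(2\pi xj/n)+\mathbf{i}\sin(2\pi xj/n)=\chi_j(x)$, the $x$-entry of $\bfv_j$ is exactly $\chi_j(x)$, so $\bfv_x=\sum_{j\in\mathcal{J}_n(S)}c_j\chi_j(x)$ for every $x\in\ZZ_n$. If distinct $x,y\in\ZZ_n$ satisfied $\chi_j(x)=\chi_j(y)$ for all $j\in\mathcal{J}_n(S)$, then $\bfv_x=\bfv_y$, contradicting the fact that the entries of $\bfv$ are pairwise distinct; hence the existence of such distinct $x,y$ rules out $\Cay(\ZZ_n;S)$ being closed distance magic. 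The argument is essentially immediate once Theorem~\ref{-1evalue} and Lemma~\ref{lem:eigenvalues-cayley-graphs} are in place; the only points requiring a little care are matching the coordinate $\omega^{xj}$ of the basic eigenvector $\bfv_j$ with the character value $\chi_j(x)$, and noting that we need only $\bfv$ to lie in the span of the admissible basic eigenvectors (containment, not equality, of subspaces).
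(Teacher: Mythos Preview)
Your proof is correct and follows essentially the same approach as the paper: identify the $-1$-eigenspace with $\operatorname{Span}\{\bfv_j:\ j\in\mathcal{J}_n(S)\}$, observe that the $x$-entry of $\bfv_j$ is $\chi_j(x)$, and conclude via Theorem~\ref{-1evalue} that equal character values at $x$ and $y$ force equal entries in every $-1$-eigenvector. The only difference is that you argue by contrapositive and spell out the linear-combination step explicitly, whereas the paper states it directly and more tersely.
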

\begin{proof}
	Assume that $x,y$ are distinct elements of $\ZZ_n$, such that $\chi_j(x) = \chi_j(y)$ for all $j\in \mathcal{J}_n(S)$. Then all eigenvectors $\bfv_j \; (j \in \mathcal{J}_n(S))$ have the $x$-entry equal to the $y$-entry. Consequently, every eigenvector for the eigenvalue $-1$ have the $x$-entry equal to the $y$-entry. By Theorem~\ref{-1evalue}, $\Cay(\ZZ_n;S)$ is not closed distance magic.
\end{proof}

\subsection{Trigonometric equation}
\label{subsec:trig}
In 1944, H.~S.~M.~Coxeter posed the following problem: determine all rational solutions of the equation
\begin{equation}
	\label{eq:Cox}
	\cos(r_1\pi) + \cos(r_2\pi) + \cos(r_3\pi) = 0,\quad 0 \leq r_1 \leq r_2 \leq r_3 \leq 1.
\end{equation}
The problem was solved in 1946 by W.~J.~R.~Crosby~\cite{CoxCro46}. It was proved that, except for a pair of ``symmetric'' exceptions, the only solutions of~\eqref{eq:Cox} are those that belong to two infinite families of ``obvious'' triples $(r_1, r_2, r_3)$, namely 
\begin{equation}
	\label{eq:sol1}
	0 \leq r_1 \leq \frac{1}{2},\quad r_2 = \frac{1}{2},\quad r_3 = 1 - r_1,
\end{equation}
and
\begin{equation}
	\label{eq:sol2}
	0 \leq r_1 \leq \frac{1}{3},\quad r_2 = \frac{2}{3}-r_1,\quad r_3 = \frac{2}{3} + r_1.
\end{equation}
The only two exceptions are 
\begin{equation}
	\label{eq:sol3}
	r_1 = \frac{1}{5},\ r_2 = \frac{3}{5},\ r_3 = \frac{2}{3}\quad \text{and}\quad r_1 = \frac{1}{3},\ r_2 = \frac{2}{5},\ r_3 = \frac{4}{5}.
\end{equation}

It is clear that no triple $(r_1,r_2,r_3)$ of rational numbers with $0 \leq r_1 \leq r_2 \leq r_3 \leq 1$ which satisfies any of the two possibilities from~\eqref{eq:sol3} satisfies~\eqref{eq:sol1} or~\eqref{eq:sol2}. Moreover, the only triple $(r_1,r_2,r_3)$ which satisfies both~\eqref{eq:sol1} and~\eqref{eq:sol2} is $(\frac{1}{6}, \frac{1}{2}, \frac{5}{6})$.

\section{Proof of Theorem~\ref{thm:val3,4}} 

In this section we prove Theorem~\ref{thm:val3,4}. To do this, we will use Proposition \ref{prop:eigenspace} extensively. We start with the cubic case. We would like to point out that we could prove this result also using more elementary methods. However, to demonstrate our approach, we will prove it using machinery developed in Section \ref{sec:prelim}. 

\subsection{Proof of Theorem~\ref{thm:val3,4} - cubic case}
	
	Note that the complete graph $K_4$ is clearly a closed distance magic circulant graph. For the other direction, assume that $n$ is even and let $1 \le a <n/2$. Let $S = \{ \pm a,\frac{n}{2} \}$ and define  $\Gamma := \Cay(\ZZ_n;S_a)$. Suppose that $\Gamma$ is a connected closed distance magic graph. By Lemma~\ref{lem:eigenvalues}(i), we know that the eigenvalues of $\Ga$ are 
	\begin{align*}
		\chi_j(S) &= 2\cos\left(\frac{2\pi ja}{n}\right)+(-1)^j \qquad (0 \le j \le n-1).
	\end{align*}
	By Theorem~\ref{-1evalue}, we must have $\chi_j(S)=-1$ for some $0 \le j \le n-1$, and so $\mathcal{J}_n(S)$ is nonempty. Depending on the parity of $j \in \mathcal{J}_n(S)$ we have the following two possibilities.
If $j \in \mathcal{J}_n(S)$ is odd, then Lemma \ref{lem:eigenvalues}(i) implies that $\cos\left(\frac{2\pi ja}{n}\right)= 0$, and so $4ja=n(2t+1)$ for some $t \in \mathbb{Z}$.	
If  $j \in \mathcal{J}_n(S)$ is even, then Lemma \ref{lem:eigenvalues}(i) implies that $\cos\left(\frac{2\pi ja}{n}\right)=-1$, and so $2ja=n(2t+1)$ for some $t \in \mathbb{Z}$.	Therefore, for each $j \in \mathcal{J}_n(S)$  we have that $4ja$ is a multiple of $n$, implying that $\chi_j(4a)= 1$ for every $j \in \mathcal{J}_n(S)$. As $\chi_j(0)=1$ for every $0 \le j \le n-1$, Proposition~\ref{prop:eigenspace} yields that $4a=0$ holds in $\ZZ_n$. As $a < n/2$, this implies $a=n/4$, and so $S=\{n/4,n/2,3n/4\}$. Since $\Ga$ is connected, $S$ must generate $\ZZ_n$, forcing $n=4$. Consequently, $\Ga$ is isomorphic to the complete graph $K_4$.  This completes the proof. \hfill \Qed

\subsection{Proof of Theorem~\ref{thm:val3,4} - tetravalent case}

Again, it is clear that the complete graph $K_5$ is a closed distance magic circulant graph. For the other direction, let $1\leq a < b <\frac{n}{2}$ and $S=\{\pm a, \pm b\}$. Define $\Gamma := \Cay(\ZZ_n;S)$ and assume that $\Ga$ is a connected closed distance magic graph. Observe that by Proposition~\ref{constant r} we have $r=\frac{5(n+1)}{2}$, and so $n$ is odd. By Lemma \ref{lem:eigenvalues}(ii) we have that
	\begin{align*}
		\cos\left(\frac{2\pi ja}{n}\right)+\cos\left(\frac{2\pi jb}{n}\right)=-\frac{1}{2}
	\end{align*}
for every $j\in \mathcal{J}_n(S)$.
	Therefore, such admissible $j$ must satisfy
	\begin{align}\label{eq4}
		\cos\frac{\pi}{3} + \cos\left(\frac{2\pi ja}{n}\right)+\cos\left(\frac{2\pi jb}{n}\right)=0.
	\end{align}
	Possible solutions for equation \eqref{eq4} are described in Subsection \ref{subsec:trig}. We analyze the solutions of \eqref{eq4} as follows.
	     Suppose first that \eqref{eq4} admits a solution of type~\eqref{eq:sol1}. Then, we must have
		$$
		  \frac{2ja}{n}=\frac{1}{2} + t \,\,\, \mbox{or}\,\,\, \frac{2jb}{n}=\frac{1}{2} + t
		$$ 
		for some integer $t$. It follows that $n(2t+1) \in \{4ja, 4jb\}$, contradicting the fact that $n$ is odd.
		Next, suppose \eqref{eq4} admits a solution of type~\eqref{eq:sol2}. Then, either $2 \pi ja/n$ or $2 \pi jb/n$ is equal to $\pi + 2t\pi$ for some integer $t$. But this implies that $n(1+2t) \in \{2ja, 2jb\}$, again contradicting the fact that $n$ is odd. 
		It follows that for every $j\in \mathcal{J}_n(S)$, the corresponding solution of  \eqref{eq4} is of type~\eqref{eq:sol3}. Therefore,
		$$
		 \Big\{{2 \pi ja \over n}, {2\pi j b \over n} \Big\} =\Big\{ \pm {2 \pi \over 5} + 2 t_1 \pi, \pm {4 \pi \over 5} + 2 t_2 \pi \Big\}
		$$ 
		for some integers $t_1, t_2$. It follows that $\chi_j(5a)=\chi_j(5b)=1$ for any $j\in \mathcal{J}_n(S)$. As $\chi_j(0)=1$ for every $0 \le j \le n-1$, Proposition~\ref{prop:eigenspace} implies that $5a=5b=0$ holds in $\ZZ_n$. As $1 \le a < b \le n/2$ this forces $a=n/5$ and $b=2n/5$, and so $S=\{n/5,2n/5, 3n/5, 4n/5\}$. Since $\Ga$ is connected, $S$ must generate $\ZZ_n$, forcing $n=5$. Consequently, $\Ga$ is isomorphic to the complete graph $K_5$.  \hfill \Qed

	\section{Proof of Theorem \ref{thm:val5} - A necessary condition}
	In this section we prove that every connected closed distance magic circulant graph with valency $5$ is isomorphic to a graph belonging to one of the four families described in Theorem \ref{thm:val5}. To do this, we will use the following notation.
	
	\begin{notation}
		\label{gen}
		Let $n \ge 6$ be an even integer and let $1 \le  a < b < n/2$. Let $\Gamma:= \Cay(\ZZ_n;\{\pm a, \pm b, \frac{n}{2}\})$ and assume that $\Ga$ is closed distance magic. Write $n = 2^t \ell$, $b+a = 2^\alpha \ell_1$ and $b-a = 2^\beta \ell_2$, where $t, \alpha, \beta$ are non-negative integers and $\ell, \ell_1, \ell_2$ are odd positive integers. For $i \in \{1,2\}$ let $d_i = \gcd(\ell,\ell_i)$ and let $n_i, m_i$ be positive integers such that $\ell = d_i n_i$ and $\ell_i = d_i m_i$. Let $\mathcal{J}_n(S)$ be as in Definition \ref{def:adm} and note that $\mathcal{J}_n(S)$ is nonempty. Observe also that by Proposition~\ref{constant r} we have $r=3(n+1)$. For an integer $m$ and a prime $p$, we let the {\em $p$-part} of $m$ be $p^t$, where $t$ is the largest integer such that $m$ is divisible by $p^t$.
    \end{notation}

 Pick $j \in \mathcal{J}_n(S)$ and note that by Lemma \ref{lem:eigenvalues}(iii) we have 
 \begin{align*}
 	2\cos\left(\frac{2\pi ja}{n}\right)+2\cos\left(\frac{2\pi jb}{n}\right) + (-1)^j=-1.
 \end{align*}   
If $j$ is even, then the above equality is equivalent to 
\begin{align}\label{eq5}
	\cos\left(\frac{2\pi ja}{n}\right)+\cos\left(\frac{2\pi jb}{n}\right) + \cos 0 = 0,
\end{align}
while if $j$ is odd, then the above equality is equivalent to 
\begin{align}\label{eq6}
	\cos\left(\frac{2\pi ja}{n}\right)+\cos\left(\frac{2\pi jb}{n}\right) =0.
\end{align}
Assume for a moment that $j \in \mathcal{J}_n(S)$ is even. Then it is clear that a solution of \eqref{eq5} could not be as described in \eqref{eq:sol3}. Moreover, if a solution of \eqref{eq5} is as described in \eqref{eq:sol1}, then we have that $\{\frac{2\pi ja}{n},\frac{2 \pi jb}{n}\} = \left\{ \pi+2k_1\pi, \frac{\pi}{2}+k_2\pi\right\}$  for some $k_1,k_2 \in \mathbb{Z}$. In this case we say that $j$ (as well as the corresponding character $\chi_j$) is of {\em type} 1. If, however,  a solution of \eqref{eq5} is as described in \eqref{eq:sol2}, then we have that $\{\frac{2\pi ja}{n},\frac{2\pi jb}{n}\} =  \left\{ \pm\frac{2\pi}{3}+2k_1\pi, \pm\frac{2\pi}{3}+2k_2\pi\right\}$ for some $k_1,k_2 \in \mathbb{Z}.$ In this case we say that $j$ (as well as the corresponding character $\chi_j$) is of {\em type} 2. 

Assume now that $j \in \mathcal{J}_n(S)$ is odd. Then \eqref{eq6} implies that $\{\frac{2\pi j b}{n} \} = \{  \pi \pm \frac{2\pi j a}{n}+2k\pi\}$ for some $k \in \mathbb{Z}$. Consequently, either $j=n(2k+1)/(2(b+a))$, or $j=n(2k+1)/(2(b-a))$. In the former case we say that $j$ (as well as the corresponding character $\chi_j$) is of {\em type} $3^+$, while in the latter case we say that $j$ (as well as the corresponding character $\chi_j$) is of {\em type} $3^-$. We continue our analysis with the following lemma in which we gather some properties of numbers $a,b$, and admissible $j$'s.  

\begin{lemma}
	\label{lem:one}
With reference to Notation \ref{gen}, the following {\rm{(i)-(viii)}} hold.

\begin{enumerate}[label=\rm(\roman*)]
	\item There exists at least one $j \in \mathcal{J}_n(S)$  of type $3^+$ or $3^-$. 
	\item If $j \in \mathcal{J}_n(S)$  is of type $3^+$, then $j=n_1 (2s_1+1)$ for some integer $s_1$. Similarly, if $j \in \mathcal{J}_n(S)$  is of type $3^-$, then $j=n_2 (2s_2+1)$ for some integer $s_2$.
	\item If there exists $j \in \mathcal{J}_n(S)$  of type $3^+$ (type $3^-$, respectively), then $t =\alpha+1$ ($t=\beta+1$, respectively). 
	\item At least one of $a,b$ is odd.
	\item If one of $a,b$ is odd and the other one is even, then $t=1$.
	\item If there exists $j \in \mathcal{J}_n(S)$ which is of type $2$, then $n$ is divisible by $3$.
	\item There are no $j \in \mathcal{J}_n(S)$ of type 1.
	\item At least one of $a,b$ is relatively prime to $n$.
	\end{enumerate}

\end{lemma}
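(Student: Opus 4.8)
The plan is to work through the eight items roughly in the listed order, exploiting the type classification of admissible $j$ established just before the lemma. For (i): since $\mathcal{J}_n(S)$ is nonempty, pick $j\in\mathcal{J}_n(S)$; if $j$ is odd it is of type $3^+$ or $3^-$ by definition, so assume every admissible $j$ is even, hence of type $1$ or $2$. A type-$1$ or type-$2$ $j$ forces $\tfrac{2\pi ja}{n}$ and $\tfrac{2\pi jb}{n}$ to be specific rational multiples of $\pi$ ($\pi,\tfrac\pi2$ or $\pm\tfrac{2\pi}3$), from which one reads off that $\chi_j(2a)=\chi_j(2b)=1$ (type 1) or $\chi_j(3a)=\chi_j(3b)=1$ (type 2) — in either case $\chi_j$ takes a common value on two distinct elements of $\ZZ_n$ (e.g.\ $0$ and $2a$, using $2a\neq 0$ which follows from $a<n/2$), and if this happened for all $j\in\mathcal{J}_n(S)$ then Proposition~\ref{prop:eigenspace} would contradict that $\Ga$ is closed distance magic. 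So some admissible $j$ is of type $3^\pm$.

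For (ii) and (iii): if $j$ is of type $3^+$ then $2j(b+a)=n(2k+1)$; writing $n=2^t\ell$ and $b+a=2^\alpha\ell_1$ and comparing $2$-parts gives $1+\alpha+v_2(j)=t$, so $t\ge\alpha+1$; but also $n(2k+1)/(2(b+a))$ must be an integer, and chasing the odd parts with $d_1=\gcd(\ell,\ell_1)$, $\ell=d_1n_1$, $\ell_1=d_1m_1$ yields that $n_1\mid j$ and $j/n_1$ is odd, i.e.\ $j=n_1(2s_1+1)$ and correspondingly $t=\alpha+1$; the type-$3^-$ case is symmetric with $b-a$, $\beta$, $n_2$. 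Items (iv),(v): if $a,b$ were both even then every entry of $S$ except $n/2$ is even, and one checks $\la S\ra\subseteq 2\ZZ_n$ unless $n/2$ is odd, but connectivity needs $\la S\ra=\ZZ_n$, and the parity bookkeeping (together with $t\ge 1$) rules this out; similarly, if exactly one of $a,b$ is even, connectivity of $\Cay(\ZZ_n;S)$ pins down $t=1$. Item (vi) is immediate: a type-$2$ admissible $j$ gives $\tfrac{2\pi ja}{n}\equiv\pm\tfrac{2\pi}{3}\pmod{2\pi}$, so $3$ divides $n/\gcd(\dots)$, hence $3\mid n$.

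The substantive items are (vii) and (viii), and I expect (vii) to be the main obstacle. For (vii), suppose for contradiction some admissible $j$ is of type $1$; then $\tfrac{2\pi ja}{n}=\pi+2k_1\pi$ forces $2ja=n(2k_1+1)$, and comparing the $2$-part of both sides uses that (by (iv)) one of $a,b$ is odd — a careful case split on which of $a,b$ plays the role forced by the type-$1$ equations, combined with (iii) (some other admissible $j'$ of type $3^\pm$ already fixes $t$ in terms of $\alpha$ or $\beta$), should produce an incompatibility of $2$-adic valuations. The delicate point is that type $1$ constrains $a$ and $b$ simultaneously (one of them becomes ``$\pi$-like'', the other ``$\tfrac\pi2$-like''), so one must track both valuations and combine with the existence of a type-$3^\pm$ admissible index from (i)/(iii); this interplay is where the real work lies. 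Finally (viii): if neither $a$ nor $b$ is coprime to $n$, let $p$ be a prime dividing $\gcd(a,n)$ (resp.\ $\gcd(b,n)$); one shows every admissible $j$ then satisfies $\chi_j(a')=\chi_j(0)$ for a suitable nonzero $a'$ (an argument analogous to the cubic/tetravalent cases), again contradicting Proposition~\ref{prop:eigenspace} — here the type analysis of (i)–(iii) and (vii) restricts admissible $j$ to types $2,3^+,3^-$, and on each such type one exhibits the required coincidence of character values, using $3\mid n$ from (vi) when type $2$ occurs.
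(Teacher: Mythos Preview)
Your plan has genuine gaps in (i), (v), and (vii). For (i), the claim ``$\chi_j(2a)=\chi_j(2b)=1$ (type 1)'' is false: in type~1 one of $\tfrac{2\pi ja}{n},\tfrac{2\pi jb}{n}$ is $\tfrac{\pi}{2}+k\pi$, giving $\chi_j(2a)=-1$ (or $\chi_j(2b)=-1$), not $1$. More importantly, Proposition~\ref{prop:eigenspace} needs a \emph{single} pair $x\ne y$ with $\chi_j(x)=\chi_j(y)$ for \emph{every} admissible $j$; you cannot use $(0,2a)$ for type~1 and $(0,3a)$ for type~2. The paper's argument is a one-liner you are missing: if every admissible $j$ is even then $\chi_j(n/2)=(-1)^j=1=\chi_j(0)$ for all such $j$, and $n/2\ne0$, done. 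For (v), connectivity does \emph{not} pin down $t=1$ (e.g.\ $n=8$, $a=1$, $b=2$ is connected with $t=3$); the actual reason is that $a,b$ of opposite parity makes $b\pm a$ odd, so $\alpha=\beta=0$, and then (i)+(iii) force $t=\alpha+1=1$ or $t=\beta+1=1$.

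For (vii), you overestimate the difficulty and point to the wrong tools. The paper's argument is self-contained and does not use (iii) or a second admissible $j'$: from the type-1 equations $\{2ja,2jb\}=\{n(2k_1+1),\,n(2k_2+1)/2\}$ with $j$ even, the entry $n(2k_2+1)/2$ has $2$-part $2^{t-1}$ while $2ja,2jb$ each have $2$-part at least $4$, so $t\ge3$. Then (iv) and (v) (not (iii)) force $a,b$ both odd, whence $2ja$ and $2jb$ share the same $2$-part, contradicting that $n(2k_1+1)$ and $n(2k_2+1)/2$ have $2$-parts $2^t$ and $2^{t-1}$. Your sketches for (ii), (iii), (vi), (viii) are essentially on the right track and match the paper's approach; note in (ii)/(iii) that $j$ being of type $3^\pm$ already means $j$ is odd, so $v_2(j)=0$ and you get $t=\alpha+1$ immediately rather than just $t\ge\alpha+1$.
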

\begin{proof}
	(i) If all $j \in \mathcal{J}_n(S)$  are even, then $\chi_j(n/2)=1$ for every $j \in \mathcal{J}_n(S)$. Taking $x=0$ and $y=n/2$ in Proposition \ref{prop:eigenspace}, we get that $\Ga$ is not closed distance magic, a contradiction. 
	
	\noindent
	(ii) We prove the claim for $j \in \mathcal{J}_n(S)$  of type $3^+$ (the proof of type $3^-$ is similar). Recall that by the comments following Notation \ref{gen} we have 
	$$
	  j = {n(2k+1) \over 2(b+a)} = {2^{t-1} d_1 n_1 (2k+1) \over 2^{\alpha} d_1 m_1}
	$$
	for some integer $k$. As $j$ is odd, we have that $\alpha=t-1$, and so  $j=n_1 (2k+1)/m_1$. As $\gcd(n_1,m_1)=1$, $m_1$ must divide $2k+1$, and the result follows. 
	
	\noindent
	(iii) The claim follows from (i) above and from the proof of (ii) above. 
	
	\noindent
	(iv) If $a,b$ are both even, then also $b+a$ and $b-a$ are even, and so $\alpha, \beta$ are both greater or equal to $1$. By (iii) above we have $t \ge 2$, and so $n/2$ is also even, contradicting the fact that $\Ga$ is connected.
	
	\noindent
	(v) If one of $a,b$ is odd and the other one is even, then $b+a$ and $b-a$ are both odd, and so $\alpha=\beta=0$. It follows from (ii) above that $t=1$.
	
	\noindent
	(vi) Let $j \in \mathcal{J}_n(S)$ be of type 2. Recall that in this case we have that $3ja=n(3k_1 \pm 1)$ and $3jb=n(3k_2 \pm 1)$ hold for some integers $k_1, k_2$. This shows that $n$ is divisible by $3$. 
	
	\noindent
	(vii) Assume that $j \in \mathcal{J}_n(S)$ is of type 1. By the comments following Notation \ref{gen}, we know that in this case
	\begin{equation}
		\label{eq:type1}
		\{2ja, 2 jb\} = \left\{ n(1+2k_1), \frac{n(1+2k_2)}{2}\right\}
	\end{equation}
	for some $k_1,k_2 \in \mathbb{Z}$. Note that as $j$ is even, this implies that $n$ is divisible by $8$, that is, $t \ge 3$. It follows from (iv), (v) above that $a,b$ are both odd, and so $2ja$ and $2jb$ have the same $2$-part. But this contradicts \eqref{eq:type1}, as $n(1+2k_1)$ and $n(1+2k_2)/2$ clearly do not have the same $2$-part.
	
	\noindent
	(viii) Recall that by (iv) above at least one of $a,b$ is odd.  Assume that $a$ is odd. We show that $\gcd(a,n)=1$ (the case when $b$ is odd is treated similarly). Denote $d=\gcd(n,a)$ and assume to the contrary that $d \ge 2$. Observe that $d$ is odd, and let $p$ be an odd prime dividing $d$. Note that $p$ also divide $n/2$, and so connectedness of $\Ga$ implies that $b$ is not divisible by $p$. In particular, none of $b+a$, $b-a$ is divisible by $p$. 
	
	Pick $j \in \mathcal{J}_n(S)$  which is of type $3^+$ or $3^-$. Recall that in this case we have $j(b \pm a) =n(2k+1)/2$ for some integer $k$. As $p$ divides $n/2$ but does not divide $b \pm a$, it must divide $j$.
	
	Let now $j \in \mathcal{J}_n(S)$ be of type 2. As in the case (vi) above we have that $3ja=n(3k_1 \pm 1)$, $3jb=n(3k_2 \pm 1)$ holds for some integers $k_1, k_2$, and $n$ is divisible by $3$. Write $n=3n_0$, and so $ja=n_0(3k_1 \pm 1)$ and $jb=n_0(3k_2 \pm 1)$. Observe that we either have $j(b+a)=3 n_0 (k_2+k_1) = n(k_2+k_1)$, or  $j(b-a)=3 n_0 (k_2-k_1) = n(k_2-k_1)$. As $p$ divides $n$ but does not divide $b \pm a$, it must divide $j$.
	
	Therefore, we just showed that every admissible $j \in \mathcal{J}_n(S)$ is divisible by $p$, which implies that for every $j \in \mathcal{J}_n(S)$ we have $\chi_j(n/p)=1$, contradicting Proposition~\ref{prop:eigenspace}. 
\end{proof}

It is well known and easy to see that for any $q \in \ZZ_n$ with $\gcd(q,n) = 1$ the graph $\Ga$ is isomorphic to $\Cay(\ZZ_n ; \{\pm qa, \pm qb, n/2\})$ (as $q$ is odd, we have that $qn/2=n/2$ holds in $\ZZ_n$). By Lemma \ref{lem:one}(viii), at least one of $a,b$ is relatively prime to $n$. Taking $q$ to be the multiplicative inverse of this element which is coprime to $n$, we get that $\Ga$ is isomorphic to   $\Cay(\ZZ_n; \{ \pm 1, \pm c, n/2\})$ with $c < n/2$.  Therefore, we will be using Notation \ref{gen} with the additional convention that $a=1$ and $b=c$, for the rest of this section.

\begin{proposition}
	\label{prop:type3-only}
	With reference to Notation \ref{gen}, there exists $j \in \mathcal{J}_n(S)$  which is of type $2$ or of type $3^+$.
\end{proposition}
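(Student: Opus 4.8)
The plan is to argue by contradiction and reduce to Proposition~\ref{prop:eigenspace}, in the same spirit as the proof of Lemma~\ref{lem:one}. So suppose that no $j \in \mathcal{J}_n(S)$ is of type $2$ or of type $3^+$; I claim this forces every admissible $j$ to be of type $3^-$. Indeed, recall from the discussion following Notation~\ref{gen} that an even admissible $j$ satisfies \eqref{eq5}. Since $ja/n$ and $jb/n$ are rational, \eqref{eq5} is an instance of Coxeter's equation with rational arguments, so its solution is of type~\eqref{eq:sol1}, \eqref{eq:sol2}, or \eqref{eq:sol3}; the type~\eqref{eq:sol3} solutions are excluded here because one of the three cosines in \eqref{eq5} equals $\cos 0 = 1$, which is not the case for either triple in \eqref{eq:sol3}. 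Hence an even admissible $j$ is of type $1$ or type $2$, and by Lemma~\ref{lem:one}(vii) there are no admissible $j$ of type $1$; under our assumption there is therefore no even admissible $j$ at all. An odd admissible $j$, on the other hand, is of type $3^+$ or $3^-$ by \eqref{eq6}, hence of type $3^-$ under our assumption. So every $j \in \mathcal{J}_n(S)$ is of type $3^-$, and by Lemma~\ref{lem:one}(i) at least one such $j$ exists.

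Next I would extract the relevant congruence. Recall that here $a = 1$ and $b = c$. By definition, a type-$3^-$ index $j$ satisfies $j(c-1) = n(2k+1)/2$ for some $k \in \ZZ$; writing $n(2k+1)/2 = nk + n/2$, this means $j(c-1) \equiv n/2 \pmod n$. Consequently $\chi_j(c-1) = \cos\pi + \mathbf{i}\sin\pi = -1$, while $\chi_j(n/2) = (-1)^j = -1$ because $j$ is odd. Thus $\chi_j(c-1) = \chi_j(n/2)$ for every $j \in \mathcal{J}_n(S)$.

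To conclude I would apply Proposition~\ref{prop:eigenspace} with $x = c-1$ and $y = n/2$. Since $2 \le c < n/2$, we have $1 \le c - 1 < n/2$, so $c-1$ and $n/2$ are distinct elements of $\ZZ_n$, and we have just seen that $\chi_j(c-1) = \chi_j(n/2)$ for all admissible $j$. By Proposition~\ref{prop:eigenspace}, $\Gamma$ is not closed distance magic, contradicting the standing assumption of Notation~\ref{gen}. Hence some $j \in \mathcal{J}_n(S)$ is of type $2$ or of type $3^+$.

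I expect the only point requiring care to be the first paragraph — verifying that excluding types $1$, $2$, and $3^+$ really leaves type $3^-$ as the only possibility for every admissible $j$ — since this is the step that draws on the full classification of Coxeter solutions in Subsection~\ref{subsec:trig} together with Lemma~\ref{lem:one}(i),(vii). Once the types are pinned down, the remaining modular computation and the appeal to Proposition~\ref{prop:eigenspace} are routine.
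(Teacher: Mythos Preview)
Your argument is correct and follows the same strategy as the paper's proof: assume by contradiction that every admissible $j$ is of type $3^-$, extract the congruence coming from $2j(c-1)=n(2k+1)$, and feed it into Proposition~\ref{prop:eigenspace}. The only difference is cosmetic: the paper observes $\chi_j(2(c-1))=1=\chi_j(0)$ and applies Proposition~\ref{prop:eigenspace} with $x=0$, $y=2(c-1)$, whereas you use the equivalent pair $x=c-1$, $y=n/2$ with common value $-1$; both conclude in the same way.
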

\begin{proof}
	Assume to the contrary that all $j \in \mathcal{J}_n(S)$ are of type $3^-$. Pick $j \in \mathcal{J}_n(S)$  and recall that we have $2j(c-1)=n(2k+1)$ for some integer $k$. Therefore,
	$$
	  \chi_j(2(c-1)) = \cos{\Big( \frac{4 \pi  j (c-1)}{n} \Big)} + \mb{i} \sin{\Big( \frac{4 \pi  j (c-1)}{n} \Big) }= \cos(2 \pi(2k+1)) + \mb{i} \sin(2 \pi(2k+1)) =1.
	 $$
	 By Proposition \ref{prop:eigenspace} we therefore have that $2(c-a)=0$ holds in $\ZZ_n$. This means that integer $2(c-a)$ is a multiple of $n$. But as $2 \le c < n/2$, we have that $2 \le 2(c-1) \le 2(n/2-2)$, a contradiction. 
\end{proof}

\begin{proposition}
	\label{prop:type3+only}
	With reference to Notation \ref{gen}, assume that all $j \in \mathcal{J}_n(S)$  are of type $3^+$. Then, $\Ga$ is isomorphic to a graph belonging to the family described in part (i) of Theorem \ref{thm:val5}. 
\end{proposition}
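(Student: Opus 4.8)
The plan is to mimic the argument in the proof of Proposition~\ref{prop:type3-only}, but to exploit the fact that for type $3^+$ characters the relevant group element is $2(c+1)$ rather than $2(c-1)$, and that $2(c+1)$ is allowed to equal $n$ whereas $2(c-1)$ is not.

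First I would record that, by Lemma~\ref{lem:one}(i), the set $\mathcal{J}_n(S)$ is nonempty, and under the standing hypothesis every one of its elements is of type $3^+$. Fix any $j\in\mathcal{J}_n(S)$. By the description of type $3^+$ characters following Notation~\ref{gen} (with $a=1$, $b=c$), there is an integer $k$ with $2j(c+1)=n(2k+1)$. Hence
\begin{align*}
	\chi_j\bigl(2(c+1)\bigr) &= \cos\!\Bigl(\tfrac{4\pi j(c+1)}{n}\Bigr) + \mathbf{i}\sin\!\Bigl(\tfrac{4\pi j(c+1)}{n}\Bigr) \\
	&= \cos\bigl(2\pi(2k+1)\bigr) + \mathbf{i}\sin\bigl(2\pi(2k+1)\bigr) = 1 = \chi_j(0).
\end{align*}
Since this holds for every $j\in\mathcal{J}_n(S)$, Proposition~\ref{prop:eigenspace} (applied with $x=0$ and $y=2(c+1)$) forces $2(c+1)\equiv 0\pmod{n}$; otherwise $0$ and $2(c+1)$ would be distinct elements of $\ZZ_n$ witnessing that $\Ga$ is not closed distance magic.

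Second, I would pin down $c$ from the divisibility $n\mid 2(c+1)$. Since $n$ is even and $1<c<n/2$, we have $2\le c\le n/2-1$, so $6\le 2(c+1)\le n$; the only multiple of $n$ in this interval is $n$ itself, whence $2(c+1)=n$, i.e.\ $c=n/2-1$. Together with the hypotheses that $n$ is even and $1<c<n/2$, this is exactly the graph described in part~(i) of Theorem~\ref{thm:val5}, which completes the proof.

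I do not anticipate any genuine obstacle: the argument is a near-verbatim analogue of that for Proposition~\ref{prop:type3-only}, and the only point worth highlighting is the arithmetic contrast between the two cases — the range of $2(c+1)$ reaches $n$, so the divisibility condition is satisfiable (and then pins $c$ down uniquely), whereas the range of $2(c-1)$ stays strictly below $n$, which is exactly why type $3^-$ alone is impossible while type $3^+$ alone yields family~(i).
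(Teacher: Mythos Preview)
Your proof is correct and follows essentially the same approach as the paper's own proof: both compute $\chi_j(2(c+1))=1$ for every admissible $j$, invoke Proposition~\ref{prop:eigenspace} to conclude that $n\mid 2(c+1)$, and then use the bounds $1<c<n/2$ to force $2(c+1)=n$. Your write-up is slightly more explicit about the nonemptiness of $\mathcal{J}_n(S)$ and the interval argument, but there is no substantive difference.
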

\begin{proof}
	Assume that all $j \in \mathcal{J}_n(S)$ are of type $3^+$. Pick $j \in \mathcal{J}_n(S)$  and recall that we have $2j(c+1)=n(2k+1)$ for some integer $k$. Similarly, as in the proof of Proposition \ref{prop:type3-only} we get that $\chi_j(2(c+1)) = 1$, and so by Proposition \ref{prop:eigenspace} we have that $2(c+1)=0$ holds in $\ZZ_n$. This means the integer $2(c+1)$ is a multiple of $n$. But $1 < c < n/2$ implies that $2(c+1)=n$, and so $c=n/2-1$. 
	\end{proof}

With reference to Notation \ref{gen}, for the rest of this section we assume that there exists $j \in \mathcal{J}_n(S)$ which is of type $2$. Recall that in this case $n$ is divisible by $3$ and there exist at least one $j \in \mathcal{J}_n(S)$  which is of type $3^+$ or $3^-$. 

\begin{lemma}
	\label{lem:type2}
	With reference to Notation~\ref{gen}, assume that there exists $j \in \mathcal{J}_n(S)$ which is of type $2$. Then, the only elements of $\mathcal{J}_n(S)$ which are of type $2$ are $n/3$ and $2n/3$. In particular, $c$ is not divisible by $3$.
\end{lemma}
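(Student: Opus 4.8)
The plan is to unwind the definition of a type-$2$ character under the standing convention $a=1$, $b=c$, and show the constraint it puts on $j$ is already very rigid. First I would take any $j\in\mathcal{J}_n(S)$ of type $2$. By the description of type-$2$ characters given right after Notation~\ref{gen}, $j$ is even and
$\{\frac{2\pi j}{n},\frac{2\pi jc}{n}\}=\{\pm\frac{2\pi}{3}+2k_1\pi,\ \pm\frac{2\pi}{3}+2k_2\pi\}$ for some $k_1,k_2\in\mathbb{Z}$. Reading off the first coordinate gives $\frac{2\pi j}{n}\equiv\pm\frac{2\pi}{3}\pmod{2\pi}$, i.e.\ $j\equiv\pm\frac{n}{3}\pmod n$. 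Here $3\mid n$ by Lemma~\ref{lem:one}(vi), and since $n$ is even in fact $6\mid n$, so $n/3$ and $2n/3$ are (even) integers. Because $1\le j\le n-1$ (note $0\notin\mathcal{J}_n(S)$, as $\chi_0(S)=5\ne-1$), the only possibilities are $j=n/3$ or $j=2n/3$. This already shows that every type-$2$ element of $\mathcal{J}_n(S)$ lies in $\{n/3,2n/3\}$.

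Next I would show that both $n/3$ and $2n/3$ are genuinely of type $2$, using the hypothesis that at least one type-$2$ element $j_0$ exists; by the previous step $j_0\in\{n/3,2n/3\}$. Since $6\mid n$, both $n/3$ and $2n/3$ are even, so $(-1)^{n/3}=(-1)^{2n/3}=1$, and Lemma~\ref{lem:eigenvalues}(iii) yields
\[
\chi_{n/3}(S)=2\cos\frac{2\pi}{3}+2\cos\frac{2\pi c}{3}+1=2\cos\frac{2\pi c}{3},\qquad
\chi_{2n/3}(S)=2\cos\frac{4\pi}{3}+2\cos\frac{4\pi c}{3}+1=2\cos\frac{4\pi c}{3}.
\]
The second-coordinate condition for $j_0$ being of type $2$ forces $c\equiv\pm1\pmod 3$ when $j_0=n/3$, and $2c\equiv\pm1\pmod 3$ (hence again $c\equiv\pm1\pmod3$) when $j_0=2n/3$; either way $c$ is not divisible by $3$. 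Then $\cos\frac{2\pi c}{3}=\cos\frac{4\pi c}{3}=-\frac12$, so $\chi_{n/3}(S)=\chi_{2n/3}(S)=-1$, placing both $n/3$ and $2n/3$ in $\mathcal{J}_n(S)$; and reading $\chi_{n/3}(S)=-1$ (resp.\ $\chi_{2n/3}(S)=-1$) back through \eqref{eq5}, the associated solution triple is $(0,\tfrac23,\tfrac23)$, which is of the form \eqref{eq:sol2} and not of the form \eqref{eq:sol1}, so both characters are of type $2$. Combining the two steps, the type-$2$ elements of $\mathcal{J}_n(S)$ are exactly $n/3$ and $2n/3$, and $3\nmid c$.

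There is no real obstacle here beyond bookkeeping: the argument is essentially a direct computation. The only points needing care are (a) recording that $6\mid n$, so that $n/3$ and $2n/3$ are even integers and the $(-1)^j$ term in Lemma~\ref{lem:eigenvalues}(iii) equals $1$; and (b) matching the cosine identity $\cos\frac{2\pi c}{3}=-\frac12$ with the correct Crosby family from Subsection~\ref{subsec:trig}, so that $n/3$ and $2n/3$ are classified as type $2$ and not type $1$ — though type $1$ is in any case excluded by Lemma~\ref{lem:one}(vii), which makes (b) essentially automatic.
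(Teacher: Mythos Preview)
Your argument is correct and follows essentially the same route as the paper's proof: use the type-$2$ condition at $a=1$ to pin down $j\in\{n/3,2n/3\}$, extract $3\nmid c$ from the $c$-coordinate, and then observe that both of $n/3,2n/3$ are admissible (the paper phrases this last point as ``$n_0\in\mathcal{J}_n(S)$ iff $2n_0\in\mathcal{J}_n(S)$''). Your write-up is somewhat more explicit than the paper's --- you record $6\mid n$, compute $\chi_{n/3}(S)$ and $\chi_{2n/3}(S)$ directly, and check the Crosby family --- but the underlying idea is identical.
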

\begin{proof}
	Let $j \in \mathcal{J}_n(S)$ be of type $3$. By the comments following Notation \ref{gen} we have that $3j=n(3k_1 \pm 1)$ and $3jc=n(3k_2 \pm 1)$ for some $k_1, k_2 \in \ZZ$. It follows that $j=n_0(3k_1 \pm 1)$, where $n_0=n/3$. As $0 \le j \le n-1$, this implies that $j \in \{n_0, 2n_0\}$. However, it is easy to see from \eqref{eq5} that $n_0 \in \mathcal{J}_n(S)$ if and only if $2n_0 \in \mathcal{J}_n(S)$. It is also clear from \eqref{eq5} that if $c$ is divisible by $3$, then $n_0 \not \in \mathcal{J}_n(S)$, a contradiction.
\end{proof}

\begin{lemma}
	\label{cond:lem1a}
	With reference to Notation~\ref{gen}, the integers $d_1$ and $d_2$ are coprime. 
\end{lemma}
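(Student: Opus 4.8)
The plan is to exploit the fact that, under the convention $a=1$ and $b=c$ in force for this part of the section, we have $b+a=c+1$ and $b-a=c-1$, two integers that differ by $2$. Hence $\gcd(c+1,c-1)$ divides $2$. Since by Notation~\ref{gen} the odd positive integers $\ell_1$ and $\ell_2$ are exactly the odd parts of $c+1$ and $c-1$, respectively, every common divisor of $\ell_1$ and $\ell_2$ divides $\gcd(c+1,c-1)$ and is odd, hence is $1$; thus $\gcd(\ell_1,\ell_2)=1$.

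From here the conclusion is immediate: $d_1=\gcd(\ell,\ell_1)$ divides $\ell_1$ and $d_2=\gcd(\ell,\ell_2)$ divides $\ell_2$, so any common divisor of $d_1$ and $d_2$ divides $\gcd(\ell_1,\ell_2)=1$, giving $\gcd(d_1,d_2)=1$. One can also phrase this in a single step: if $g=\gcd(d_1,d_2)$, then $g\mid\ell_1$ and $g\mid\ell_2$, hence $g\mid (c+1)$ and $g\mid (c-1)$, so $g\mid 2$; but $g$ divides the odd number $\ell$, so $g$ is odd, forcing $g=1$.

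There is essentially no obstacle here beyond carefully tracking which quantities the odd parts $\ell, \ell_1, \ell_2$ are attached to: the substance of the lemma is simply that $c+1$ and $c-1$ can share only the factor $2$, which is invisible to their odd parts. I would present it as a short paragraph-length argument rather than a structured proof.
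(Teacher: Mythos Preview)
Your proof is correct and follows essentially the same route as the paper's: both argue that a common divisor $g$ of $d_1$ and $d_2$ must divide $c+1$ and $c-1$, hence divide $2$, and then use oddness (you via $g\mid\ell$, the paper via $d_1,d_2$ being odd by definition) to force $g=1$. Your intermediate step through $\gcd(\ell_1,\ell_2)=1$ is a harmless detour; your ``single step'' formulation is virtually identical to the paper's argument.
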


\begin{proof}
	Note that by definition $d_1$ and $d_2$ are odd. Now, if $d$ divides $d_1$ (and thus $n$) and $d_2$, then it divides both $c+1$ and $c-1$, and so it also divides $2$ and $2c$. Since $d$ is odd, it follows that $d = 1$.
\end{proof}

\begin{proposition}
	\label{cond:prop3}
	With reference to Notation~\ref{gen}, assume that there exists $j \in \mathcal{J}_n(S)$ of type $2$. Then, $\ell = d_1d_2$. Consequently, $n = 2^t d_1 d_2$, $n_1=d_2$ and $n_2=d_1$. 
\end{proposition}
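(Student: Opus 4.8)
The plan is to reduce the identity $\ell = d_1 d_2$ to a prime‑by‑prime comparison of $p$-parts, and then to prove that comparison by contradiction through Proposition~\ref{prop:eigenspace}. Under the standing convention $a = 1$, $b = c$, recall that $\ell_1$ and $\ell_2$ are the odd parts of $c+1$ and $c-1$. By Lemma~\ref{cond:lem1a} the odd integers $d_1$ and $d_2$ are coprime, so $d_1 d_2 \mid \ell$ and it suffices to show $\ell \mid d_1 d_2$. Fix an odd prime $p$ and write $v_p$ for the $p$-adic valuation. Since $n = 2^t \ell$ we have $v_p(n) = v_p(\ell)$; since $p$ is odd it cannot divide both $c+1$ and $c-1$, so $\min\bigl(v_p(c+1), v_p(c-1)\bigr) = 0$; and since $d_i = \gcd(\ell,\ell_i)$, we have $v_p(d_i) = \min\bigl(v_p(\ell), v_p(c+(-1)^{i+1})\bigr)$. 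Combining these, $v_p(d_1 d_2) = \min\bigl(v_p(\ell), \max(v_p(c+1), v_p(c-1))\bigr)$, so $\ell \mid d_1 d_2$ is equivalent to the assertion that, for every odd prime $p$ dividing $n$,
\[
  v_p(n) \ \le\ \max\bigl(v_p(c+1),\, v_p(c-1)\bigr). \qquad (\star)
\]

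To prove $(\star)$ I would suppose, towards a contradiction, that $p$ is an odd prime with $p \mid n$ and $v_p(n)$ strictly larger than both $v_p(c+1)$ and $v_p(c-1)$, and then show that $p$ divides every $j \in \mathcal{J}_n(S)$. This suffices: as $\mathcal{J}_n(S)$ is nonempty (Notation~\ref{gen}), it yields $\chi_j(n/p) = 1 = \chi_j(0)$ for all $j \in \mathcal{J}_n(S)$, and since $0 \ne n/p$ in $\ZZ_n$, Proposition~\ref{prop:eigenspace} contradicts the assumption that $\Gamma$ is closed distance magic. Now every $j \in \mathcal{J}_n(S)$ is of type $3^+$, type $3^-$, or type $2$: even admissible $j$ are of type $1$ or $2$ by the discussion preceding Lemma~\ref{lem:one}, type $1$ is ruled out by Lemma~\ref{lem:one}(vii), and odd admissible $j$ are of type $3^+$ or $3^-$. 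If $j$ is of type $3^+$ (resp.\ $3^-$), then $2j(c+1) = n(2k+1)$ (resp.\ $2j(c-1) = n(2k+1)$) for some integer $k$, and comparing $p$-adic valuations together with $v_p(c+1) < v_p(n)$ (resp.\ $v_p(c-1) < v_p(n)$) forces $v_p(j) \ge 1$, i.e.\ $p \mid j$.

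The remaining, and most delicate, case is $j$ of type $2$. By Lemma~\ref{lem:type2} we then have $j \in \{n/3, 2n/3\}$ and $3 \nmid c$. If $p \ne 3$, then $v_p(n/3) = v_p(2n/3) = v_p(n) \ge 1$, so $p \mid j$ immediately. If $p = 3$, however, $3 \mid n$ does not directly give $3 \mid n/3$; here one invokes $3 \nmid c$, which forces $3$ to divide exactly one of $c+1, c-1$, so $\max\bigl(v_3(c+1), v_3(c-1)\bigr) \ge 1$; combined with the contradiction hypothesis $v_3(n) > \max\bigl(v_3(c+1), v_3(c-1)\bigr)$, this gives $v_3(n) \ge 2$, i.e.\ $9 \mid n$, and hence $3$ divides both $n/3$ and $2n/3$, so again $p \mid j$. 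This completes the proof of $(\star)$, and hence $\ell = d_1 d_2$.

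Finally, from $\ell = d_1 d_2$ we get $n = 2^t \ell = 2^t d_1 d_2$, and the relations $\ell = d_1 n_1$ and $\ell = d_2 n_2$ give $n_1 = \ell/d_1 = d_2$ and $n_2 = \ell/d_2 = d_1$, as claimed. The main obstacle, as indicated, is the $p = 3$ subcase of the type‑$2$ analysis, where the divisibility $p \mid j$ is not automatic and requires feeding in the extra information $3 \nmid c$ supplied by Lemma~\ref{lem:type2}.
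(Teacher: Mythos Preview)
Your proof is correct. Both your argument and the paper's hinge on Proposition~\ref{prop:eigenspace} together with the classification of admissible $j$'s into types~$2$, $3^+$, $3^-$ (type~$1$ being excluded by Lemma~\ref{lem:one}(vii)), but the two proofs package this differently. The paper tests a \emph{single} element: it first establishes the auxiliary claim that $3 \mid d_1 d_2$ (using $3 \nmid c$ from Lemma~\ref{lem:type2}), then shows directly that $\chi_j(2^t d_1 d_2) = 1$ for every admissible $j$, so that $n \mid 2^t d_1 d_2$; combined with the coprimality bound $d_1 d_2 \le \ell$ from Lemma~\ref{cond:lem1a}, this forces equality in one stroke. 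You instead unfold the target into the per-prime inequality~$(\star)$ and, for each hypothetical offending prime $p$, produce the witness $n/p$ by showing $p \mid j$ for every admissible $j$. Your $p$-adic bookkeeping makes the role of the prime $3$ (and the input $3 \nmid c$) very transparent as a genuinely separate subcase, whereas the paper absorbs it into the preliminary claim $3 \mid d_1 d_2$; on the other hand, the paper's version is a bit shorter, needing only one invocation of Proposition~\ref{prop:eigenspace} rather than one per prime.
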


\begin{proof}
	We first claim that either $d_1$ or $d_2$ is divisible by $3$. Recall that by Lemma \ref{lem:one}(vi) $n$ is divisible by $3$, and that by Lemma \ref{lem:type2}, $c$ is not divisible by $3$.  Consequently, exactly one of $c+1, c-1$ is divisible by $3$. Assume that $c+1=2^{\alpha} d_1 m_1$ is divisible by $3$ (proof  of the claim in the case when $c-1$ is divisible by $3$ is similar). If $d_1$ is divisible by $3$, then we are done. If $m_1$ is divisible by $3$, then $n_1$ is not divisible by $3$ (recall that $n_1$ and $m_1$ are relatively prime). As $n=2^t d_1 n_1$ is divisible by $3$, we again have that $d_1$ is divisible by $3$. The claim follows. 
	
	Pick any $j \in \mathcal{J}_n(S)$. If $j$ is of type 2 then, by Lemma \ref{lem:type2}, we have that $j \in \{n/3, 2n/3\}$. As either $d_1$ or $d_2$ is divisible by $3$, we have that 
	$$
			\chi_{n/3}(2^t d_1 d_2)) = \cos{\Big( 2 \pi 2^t  (d_1 d_2/3) \Big)} + \mb{i} \sin{\Big( 2 \pi  2^t  (d_1 d_2/3) \Big)} = 1. 
	$$
	Similarly, we get that $\chi_{2n/3}(2^t d_1 d_2)=1$. If $j$ is of type $3^+$, then, using Lemma \ref{lem:one}(ii), we have  
	\begin{equation}
		\begin{split}
			\chi_j(2^t d_1 d_2)) & = \cos{\Big( \frac{2 \pi  j  2^t d_1 d_2}{n} \Big)} + \mb{i} \sin{\Big( \frac{2 \pi  j 2^t d_1 d_2}{n} \Big)} \\
			&= \cos{\Big( \frac{2 \pi  n_1 (2 s_1+1) 2^t  d_1 d_2}{2^t d_1 n_1} \Big)} + \mb{i} \sin{\Big( \frac{2 \pi  n_1 (2 s_1+1) 2^t  d_1 d_2}{2^t d_1 n_1} \Big)} \\
			&= \cos{\Big(2\pi (2 s_1+1)  d_2 \Big)} + \mb{i} \sin{\Big(2 \pi (2 s_1+1)  d_2 \Big)} = 1. 
		\end{split}
	\end{equation}
	A similar argument shows that also in the case when $j$ is of type $3^-$ we have that $\chi_j(2^t d_1 d_2)=1$.
	
	Therefore, for every $j \in \mathcal{J}_n(S)$ we have that $\chi_j(2^t d_1 d_2)=1$, and so by Proposition~\ref{prop:eigenspace} we have that $2^t d_1 d_2$ is a (positive) multiple of $n$. However, by Lemma~\ref{cond:lem1a} the integers $d_1$ and $d_2$ are relatively prime divisors of $\ell$, and so $d_1 d_2 \le \ell$. Note that this implies $2^t d_1 d_2 \le n$, and so we in fact have $2^t d_1 d_2 = n$ as claimed.
\end{proof}

We now analyze the case when $c$ is even.
 
 \begin{proposition}
 	\label{prop:c-even}
 	With reference to Notation~\ref{gen}, assume that there exists $j \in \mathcal{J}_n(S)$ of type $2$. If $c$ is even,  then $\Ga$ is isomorphic to a graph belonging to the family described in part (ii) of Theorem \ref{thm:val5}. 
 \end{proposition}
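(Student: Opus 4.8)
The plan is to read off the two defining conditions of family~(ii) of Theorem~\ref{thm:val5} — namely $n \equiv 2 \pmod 4$ and that $2(c^2-1)$ be an odd multiple of $n$ — directly from Lemma~\ref{lem:one}, Proposition~\ref{cond:prop3} and some elementary bookkeeping on $2$-adic valuations. Recall that, by the reductions preceding Notation~\ref{gen}, we are already working with $a=1$, $b=c$ and $1<c<n/2$, so it remains only to pin down $n$ and $c$ modulo these divisibility constraints; the proposition lies in the ``necessary condition'' part of the argument, so we need only exhibit the required parametrization, not verify that the resulting graph is $\mathrm{CDM}$.

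First I would settle the congruence on $n$. Since $a=1$ is odd and $b=c$ is even, Lemma~\ref{lem:one}(v) gives $t=1$; hence $n=2\ell$ with $\ell$ odd, that is, $n\equiv 2\pmod 4$. Moreover, because $c$ is even both $c+1$ and $c-1$ are odd, so in the notation of Notation~\ref{gen} we have $\alpha=\beta=0$, and therefore $\ell_1=b+a=c+1$ and $\ell_2=b-a=c-1$; in particular $c^2-1=\ell_1\ell_2$.

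Next I would invoke the standing hypothesis that some $j\in\mathcal{J}_n(S)$ is of type~$2$. By Proposition~\ref{cond:prop3} this forces $\ell=d_1d_2$, and combined with $t=1$ we obtain $n=2d_1d_2$. Writing $\ell_i=d_im_i$ as in Notation~\ref{gen}, we then get
\[
	2(c^2-1)=2\ell_1\ell_2=2(d_1m_1)(d_2m_2)=2d_1d_2\cdot m_1m_2=n\cdot m_1m_2 .
\]
Since $\ell$, $\ell_1$, $\ell_2$, $d_1$, $d_2$ are all odd by construction, the quotients $m_1=\ell_1/d_1$ and $m_2=\ell_2/d_2$ are odd, hence $m_1m_2$ is odd and $2(c^2-1)$ is an odd multiple of $n$. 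Together with $n\equiv 2\pmod 4$, the fact that $c$ is even, and $1<c<n/2$, this places $\Ga=\Cay(\ZZ_n;\{\pm 1,\pm c,n/2\})$ in the family described in part~(ii) of Theorem~\ref{thm:val5}, as required.

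I do not anticipate a genuine obstacle here: the statement is essentially a repackaging of Proposition~\ref{cond:prop3} once the $2$-parts of $n$, $c+1$ and $c-1$ have been tracked correctly. The only point requiring a little care is to confirm that the multiple $m_1m_2$ produced above is odd, which — as noted — is immediate from the fact that every factor involved ($\ell$, $\ell_1$, $\ell_2$, $d_1$, $d_2$) is odd by definition.
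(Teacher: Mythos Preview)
Your proof is correct and matches the paper's own argument almost verbatim: the paper also deduces $\alpha=\beta=0$ from $c$ even, obtains $t=1$ (though it cites Lemma~\ref{lem:one}(iii) rather than~(v)), and then computes $2(c^2-1)=2d_1m_1d_2m_2=nm_1m_2$ using Proposition~\ref{cond:prop3} implicitly. Your explicit invocation of Proposition~\ref{cond:prop3} and your careful check that $m_1m_2$ is odd make the dependence on the type~$2$ hypothesis clearer than the paper's more compressed version.
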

 \begin{proof}
 	Assume $c$ is even. Then $c+1$ and $c-1$ are odd, and so $\alpha=\beta=0$. Consequently, $t=1$ by Lemma \ref{lem:one}(iii), which means that $n \equiv 2 \pmod{4}$. Also $2(c^2-1)=2(c+1)(c-1)=2d_1 m_1d_2 m_2 = n m_1 m_2$, and so $2(c^2-1)$ is indeed an odd multiple of $n$.
 \end{proof}

Let us now turn our attention to the case when $c$ is odd. 
 \begin{lemma}
 	\label{lem:c-odd}
 	With reference to Notation~\ref{gen}, assume that there exists $j \in \mathcal{J}_n(S)$ of type $2$. If $c$ is odd,  then either all $j \in \mathcal{J}_n(S)$ are of types $2$ and $3^+$, or of types $2$ and $3^-$. 
 \end{lemma}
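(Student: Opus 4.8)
The claim is that, under the standing assumption that some $j\in\mathcal J_n(S)$ is of type $2$ and with $c$ odd, the admissible set $\mathcal J_n(S)$ cannot simultaneously contain an element of type $3^+$ and an element of type $3^-$. The plan is to argue by contradiction: suppose there exist $j^+\in\mathcal J_n(S)$ of type $3^+$ and $j^-\in\mathcal J_n(S)$ of type $3^-$. By Lemma~\ref{lem:one}(iii) this forces simultaneously $t=\alpha+1$ and $t=\beta+1$, hence $\alpha=\beta$. Since $c$ is odd, $c+1$ and $c-1$ are both even; write $c+1=2^\alpha d_1 m_1$ and $c-1=2^\beta d_2 m_2$ with $\alpha=\beta\ge 1$. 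I would then exploit the elementary fact that consecutive even numbers $c-1$ and $c+1$ cannot both be divisible by $4$: exactly one of them has $2$-part exactly $2$, while the other has $2$-part at least $4$. Therefore $\alpha=\beta$ can only hold if both $2$-parts equal $2$, i.e. $\alpha=\beta=1$, which in turn forces $t=2$, so $n\equiv 4\pmod 8$.

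With $t=2$ pinned down, the next step is to bring in Proposition~\ref{cond:prop3}, which (under the type-$2$ hypothesis) gives $\ell=d_1 d_2$ and $n=2^t d_1 d_2 = 4 d_1 d_2$, with $n_1=d_2$, $n_2=d_1$. Combined with $\alpha=\beta=1$ we get $c+1=2 d_1 m_1$ and $c-1=2 d_2 m_2$. Subtracting, $2 = 2 d_1 m_1 - 2 d_2 m_2$, so $d_1 m_1 - d_2 m_2 = 1$; in particular $\gcd(d_1 m_1, d_2 m_2)=1$, consistent with Lemma~\ref{cond:lem1a}. Now I would evaluate $\chi_j$ on a cleverly chosen element of $\ZZ_n$ to contradict Proposition~\ref{prop:eigenspace}. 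The natural candidate is $z = n/4 = d_1 d_2$ (or some small multiple thereof), and I want to show $\chi_j(z)$ takes the same value for every $j\in\mathcal J_n(S)$ — i.e. for the type-$2$ characters $n/3,2n/3$ and for every type-$3^\pm$ character — while $z\neq 0$ in $\ZZ_n$, which would be the desired contradiction. For a type-$3^+$ character $j=n_1(2s_1+1)=d_2(2s_1+1)$ (Lemma~\ref{lem:one}(ii)), one computes $\tfrac{2\pi j z}{n} = \tfrac{2\pi\, d_2(2s_1+1)\, d_1 d_2}{4 d_1 d_2} = \tfrac{\pi}{2}(2s_1+1)d_2$, and similarly for type $3^-$ with $d_1$ in place of $d_2$; the parities of $d_1, d_2$ then determine these values, and since $d_1 m_1 - d_2 m_2 = 1$ with $d_1, d_2$ coprime odd numbers, the parity situation is rigid enough to make all these values coincide (after also checking the two type-$2$ characters $n/3, 2n/3$, which requires $3\mid d_1 d_2$ as established in the proof of Proposition~\ref{cond:prop3}). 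The element $z=n/4$ is nonzero in $\ZZ_n$, so Proposition~\ref{prop:eigenspace} yields the contradiction.

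The main obstacle I anticipate is the last bookkeeping step: getting a single element $z\in\ZZ_n\setminus\{0\}$ on which $\chi_j$ is constant across all three relevant character types at once. The type-$2$ characters want $z$ to be a multiple of $3$ after scaling (handled as in Proposition~\ref{cond:prop3}), whereas the type-$3^\pm$ characters pull the value of $\chi_j(z)$ into $\{1,-1,\pm\mathbf i\}$ depending on residues of $s_i$ and parities of $d_1,d_2$; I may need to replace $z=n/4$ by $z=n/2$, or by $2 d_1 d_2$, or take a suitable combination, and possibly split into the subcases "$3\mid d_1$" versus "$3\mid d_2$". If a single $z$ genuinely does not work, the fallback is to instead show directly that the coexistence of $j^+$ of type $3^+$ and $j^-$ of type $3^-$ forces $2^{t-1}d_1 d_2 = n/2$ to be a period common to all admissible $\bfv_j$, again via Proposition~\ref{prop:eigenspace}, contradicting $n/2 \not\equiv 0$. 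Either way the conclusion is that only one of type $3^+$, type $3^-$ can occur, which together with Proposition~\ref{prop:type3-only} and the existence of a type-$2$ character gives exactly the two stated alternatives: types $2$ and $3^+$, or types $2$ and $3^-$.
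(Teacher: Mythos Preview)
You already have the paper's argument in your first paragraph, but you walk right past the contradiction. Once you observe that for $c$ odd the numbers $c-1$ and $c+1$ are consecutive even integers, so that exactly one of them has $2$-part $2$ while the other has $2$-part at least $4$, you have established $\alpha \neq \beta$. Combined with $\alpha = \beta$ (from Lemma~\ref{lem:one}(iii), applied to $j^+$ and $j^-$), this is already the desired contradiction. Your next sentence, ``Therefore $\alpha=\beta$ can only hold if both $2$-parts equal $2$'', is a non-sequitur: you have just shown that the two $2$-parts are \emph{never} equal, so $\alpha=\beta$ can hold in no case whatsoever. There is no surviving case $\alpha=\beta=1$, $t=2$ to analyze.

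Everything after that point --- pinning down $t=2$, invoking Proposition~\ref{cond:prop3}, and hunting for an element $z$ on which all $\chi_j$ agree --- is unnecessary, and as you yourself anticipate, the character-value bookkeeping does not close cleanly (for $z=n/4$ the values $\chi_j(z)$ for type-$3^\pm$ characters land in $\{\pm\mathbf{i}\}$ and depend on the parity of $s_i$, so they are not constant across those $j$). The paper's proof is exactly the two-line observation you abandon: $c$ odd forces $\{\alpha,\beta\}$ to consist of $1$ and some integer $\ge 2$, hence $\alpha\neq\beta$, hence by Lemma~\ref{lem:one}(iii) types $3^+$ and $3^-$ cannot coexist in $\mathcal{J}_n(S)$.
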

\begin{proof}
	Assume that $c$ is odd, and so $c+1$ and $c-1$ are two consecutive even integers. It follows that one of $c+1,c-1$ is divisible by $4$, and the other one is not. In other words, one of $\alpha, \beta$ is at least $2$, while the other one is equal to $1$. It follows now from Lemma \ref{lem:one}(iii) that there cannot exist $j_1, j_2 \in \mathcal{J}_n(S)$, such that $j_1$ is of type $3^+$, while $j_2$ is of type $3^-$. 
\end{proof}

\begin{proposition}
	\label{prop:type23+}
	With reference to Notation~\ref{gen}, assume that there exists $j \in \mathcal{J}_n(S)$ of type $2$ and $c$ is odd. Also let  all $j \in \mathcal{J}_n(S)$ be of type $2$ or $3^+$. Then, $\Ga$ is isomorphic to a graph belonging to the family described either in part (i) or in part (iii) of Theorem \ref{thm:val5}. 
\end{proposition}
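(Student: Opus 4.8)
The plan is to determine the admissible set $\mathcal{J}_n(S)$ and then to read off $n$ and $c$ by applying Proposition~\ref{prop:eigenspace} with $y=0$, splitting into two cases according to which of $d_1,d_2$ is divisible by $3$. I would first record the structural consequences of the hypotheses. Since there is a $j\in\mathcal{J}_n(S)$ of type~$2$, Proposition~\ref{cond:prop3} gives $\ell=d_1d_2$, $n=2^td_1d_2$, $n_1=d_2$ and $n_2=d_1$, and from $d_1=\gcd(\ell,\ell_1)=d_1\gcd(d_2,m_1)$ we get $\gcd(d_2,m_1)=1$. As $c$ is odd, $c+1$ is even, so $\alpha\ge1$. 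By hypothesis no admissible integer is of type~$3^-$, while Lemma~\ref{lem:one}(i) provides an admissible integer of type~$3^+$ or~$3^-$; hence some $j\in\mathcal{J}_n(S)$ is of type~$3^+$, so Lemma~\ref{lem:one}(iii) gives $t=\alpha+1\ge2$ and $c+1=2^{t-1}d_1m_1$. Finally $3\mid\ell$ by Lemma~\ref{lem:one}(vi), so by Lemma~\ref{cond:lem1a} exactly one of $d_1,d_2$ is divisible by $3$.

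Next I would compute $\mathcal{J}_n(S)$. By Lemma~\ref{lem:type2} its type-$2$ members are exactly $n/3$ and $2n/3$, and both lie in $\mathcal{J}_n(S)$ since one does. An admissible integer $j$ of type~$3^+$ satisfies $j(c+1)\equiv n/2\pmod n$; substituting $c+1=2^{t-1}d_1m_1$ and $n=2^td_1d_2$ and cancelling the common factor $2^{t-1}d_1$ turns this into $m_1j\equiv d_2\pmod{2d_2}$, which, since $\gcd(m_1,2d_2)=1$, is equivalent to $j\equiv d_2\pmod{2d_2}$. Thus the type-$3^+$ members of $\mathcal{J}_n(S)$ are exactly the odd multiples of $d_2$ in $\{0,1,\dots,n-1\}$ (conversely, each such integer is odd and satisfies the above congruence, hence has character value $2\cdot0+(-1)=-1$ on $S$), and so $\mathcal{J}_n(S)=\{n/3,2n/3\}\cup\{\,(2r+1)d_2:0\le(2r+1)d_2<n\,\}$.

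Now I would apply Proposition~\ref{prop:eigenspace}: since $\Gamma$ is closed distance magic, the only $z\in\ZZ_n$ with $\chi_j(z)=1$ for every $j\in\mathcal{J}_n(S)$ is $z=0$. For such a $z$, the member $j=d_2$ forces $2^td_1\mid z$, say $z=2^td_1s$ with $0\le s<d_2$, and then $\chi_j(z)=1$ holds automatically at every odd multiple of $d_2$; the member $j=n/3$ forces $3\mid2^td_1s$. If $3\mid d_1$ this last condition is automatic, so we must have $d_2=1$ (otherwise $z=2^td_1$ is a nonzero solution); then $c+1=2^{t-1}d_1m_1\le n/2=2^{t-1}d_1$ forces $m_1=1$, hence $c=n/2-1$, and $\Gamma$ lies in family~(i) of Theorem~\ref{thm:val5}. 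If instead $3\mid d_2$ (so $3\nmid d_1$), then $3\mid2^td_1s$ holds if and only if $3\mid s$, so we must have $d_2\le3$ (otherwise $z=3\cdot2^td_1$ is a nonzero solution), that is $d_2=3$; then $c+1=2^{t-1}d_1m_1\le n/2=3\cdot2^{t-1}d_1$ gives $m_1\le3$, and with $m_1$ odd and $\gcd(m_1,3)=1$ we conclude $m_1=1$. Hence $c+1=2^{t-1}d_1$ and $n=3\cdot2^td_1=6(c+1)$; moreover $3=d_2\mid c-1=2^{t-1}d_1-2=2(2^{t-2}d_1-1)$ gives $2^{t-2}d_1\equiv1\pmod3$, and since $2\equiv-1\pmod3$ this reads $(-1)^td_1\equiv1\pmod3$, that is $d_1\equiv(-1)^t\pmod3$; as $d_1$ is odd this forces $d_1\equiv(-1)^t\pmod6$. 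Writing $d_1=6k+(-1)^t$ with $k$ a nonnegative integer, we obtain $n=3\cdot2^t(6k+(-1)^t)$ and $c=2^{t-1}(6k+(-1)^t)-1$, with $t\ge2$ and $c\ge2$, so $\Gamma$ lies in family~(iii) of Theorem~\ref{thm:val5}.

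I expect the most delicate step to be the exact description of $\mathcal{J}_n(S)$ — verifying that its type-$3^+$ members are precisely the odd multiples of $d_2$, and that no other admissible integers occur beyond these and $n/3,2n/3$ — together with keeping the divisibility bookkeeping consistent across the two cases; once $\mathcal{J}_n(S)$ is pinned down, the arithmetic leading to families~(i) and~(iii) (in particular the congruence $d_1\equiv(-1)^t\pmod6$) is routine.
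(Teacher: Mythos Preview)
Your proof is correct and follows essentially the same route as the paper's: reduce via Proposition~\ref{cond:prop3} and Lemma~\ref{lem:one} to $n=2^td_1d_2$ and $c+1=2^{t-1}d_1m_1$, then apply Proposition~\ref{prop:eigenspace} in the two cases $3\mid d_1$ and $3\mid d_2$. The only organizational difference is that you do not peel off $c=n/2-1$ at the outset; whereas the paper assumes $c<n/2-1$ and obtains a contradiction when $3\mid d_1$, you use $c+1\le n/2$ to force $m_1=1$ and recover $c=n/2-1$ directly in that case, which is a slightly cleaner way to land in family~(i).
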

\begin{proof}
Observe first that if $c=n/2-1$, then $\Ga$ is isomorphic to a graph belonging to the family described in part (i) of Theorem \ref{thm:val5}. For the rest of this proof we therefore assume that $c < n/2-1$.

 Recall that we have $n=2^td_1d_2$ and $t=\alpha+1$. Therefore, $c+1=2^{t-1} d_1 m_1$. Recall also that by Lemma \ref{lem:one}(ii), an admissible $j$ of  type $3^+$ is of the form $j=d_2(2s+1)$ for some integer $s$, and that an admissible $j$ of type $2$ either $n/3$ or $2n/3$. As $n$ is divisible by $3$ and $d_1, d_2$ are relatively prime, we have that $3$ divides exactly one of $d_1, d_2$. Assume first that $3 | d_1$. Then
 \begin{align}
 	\chi_{\frac{n}{3}}(2^t d_1)&=\cos\left(\frac{2\pi \frac{n}{3}2^td_1}{n}\right)+\mb{i} \sin \left(\frac{2\pi \frac{n}{3}2^td_1}{n}\right)=1.
 \end{align}
Similarly, $\chi_{\frac{2n}{3}}(2^t d_1)=1$. Now if $j \in \mathcal{J}_n(S)$ is of type $3^+$, then $j=d_2(2s+1)$ for some integer $s$. Therefore, as $n=2^t d_1 d_2$, we have
$$ 
  \chi_j(2^t d_1) = \cos\left(\frac{2\pi d_2 (2s+1) 2^td_1}{n}\right)+\mb{i} \sin \left(\frac{2\pi d_2 (2s+1) 2^td_1}{n}\right)=1.
$$
It follows that for each admissible character $\chi_j$ we have that $\chi_j(2^t d_1)=1$, and so Proposition~\ref{prop:eigenspace} implies that $d_2=1$.  However, since $d_1$ is a divisor of $\ell_1$,
$$
  c+1=2^{t-1}\ell_1< \frac{n}{2}=2^{t-1}d_1
$$
yields a contradiction.  

Now, suppose that $3\mid d_2$. Similarly, as above we see that for any admissible character $\chi_j$ we have that 
$\chi_{j}(2^t 3 d_1)=1$. Proposition~\ref{prop:eigenspace} implies that $2^t3d_1=kn=k2^td_1d_2$ for some $k\in\mathbb{Z}$. Therefore, $3=kd_2$, and since $3\mid d_2$ we must have $d_2=3$ and $k=1$. As $c+1=2^{t-1}\ell_1 < \frac{n}{2}=2^{t-1}3d_1$, we have that $\ell_1< 3d_1$. As $\ell_1$ is an odd multiple of $d_1$, this shows that $ \ell_1=d_1$. Summarizing all together, we have $n=2^t 3 d_1$, $c+1=2^{t-1} d_1$, and so $c-1=2^{t-1} d_1-2=2(2^{t-2} d_1-1)$ for some odd $d_1$, which is not divisible by $3$. But as $3|d_2$, we have that $3|c-1$, and so $3$ is a divisor of $2^{t-2} d_1 - 1$. If $t$ is even, then we have that $2^{t-2}\equiv1 \pmod{3}$, and so $d_1\equiv 1 \pmod{3}$. As $d_1$ is odd, $d_1=6k+1$ for some nonnegative integer $k$. If $t$ is odd, then we have that $2^{t-2}\equiv-1 \pmod{3}$, and so $d_1\equiv -1 \pmod{3}$. As $d_1$ is odd, $d_1=6k-1$ for some positive integer $k$. This shows that $\Ga$ is isomorphic to a graph belonging to the family described in part (iii) of Theorem \ref{thm:val5}.
\end{proof}

\begin{proposition}
	\label{prop:type23-}
	With reference to Notation~\ref{gen}, assume that there exists $j \in \mathcal{J}_n(S)$ of type $2$, and $c$ is odd. Also let all $j \in \mathcal{J}_n(S)$ be of type $2$ or $3^-$. Then, $\Ga$ is isomorphic to a graph belonging to the family described in part (iv) of Theorem \ref{thm:val5}. 
\end{proposition}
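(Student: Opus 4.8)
The plan is to mirror the structure of the proof of Proposition~\ref{prop:type23+}, with the roles of $c+1$ and $c-1$ interchanged and a few sign changes coming from the fact that now the admissible $j$'s of type $3^-$ satisfy $j(c-1) = n(2k+1)/2$ rather than $j(c+1) = n(2k+1)/2$. First I would record the basic data: by Proposition~\ref{cond:prop3} we have $n = 2^t d_1 d_2$, $n_1 = d_2$, $n_2 = d_1$, and by Lemma~\ref{lem:one}(iii) the existence of a type $3^-$ admissible $j$ forces $t = \beta + 1$, so that $c - 1 = 2^{t-1} d_2 m_2$. By Lemma~\ref{lem:one}(ii) any admissible $j$ of type $3^-$ has the form $j = n_2(2s+1) = d_1(2s+1)$ for some integer $s$, and any admissible $j$ of type $2$ is $n/3$ or $2n/3$ (Lemma~\ref{lem:type2}). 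Since $n$ is divisible by $3$ and $d_1, d_2$ are coprime (Lemma~\ref{cond:lem1a}), exactly one of $d_1, d_2$ is divisible by $3$.

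Next I would split into the two cases according to whether $3 \mid d_2$ or $3 \mid d_1$, exactly as in Proposition~\ref{prop:type23+}. If $3 \mid d_2$, then one checks that $\chi_j(2^t d_2) = 1$ for every admissible $j$: for $j \in \{n/3, 2n/3\}$ this is immediate since $2^t d_2$ is then a multiple of $2^t d_2/3 \cdot 3$ divided appropriately; for $j = d_1(2s+1)$ of type $3^-$ we get $\chi_j(2^t d_2) = \cos(2\pi d_1(2s+1) 2^t d_2 / n) + \mathbf{i}\sin(\cdots) = 1$ because $n = 2^t d_1 d_2$. Then Proposition~\ref{prop:eigenspace} forces $d_1 = 1$, and the inequality $c - 1 = 2^{t-1}\ell_2 < n/2 = 2^{t-1} d_2$ (using $d_2 \mid \ell_2$) would give $\ell_2 < d_2$, a contradiction; so this case does not occur. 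Hence $3 \mid d_1$. Then the same computation with $2^t \cdot 3 \cdot d_2$ in place of $2^t d_2$ shows $\chi_j(2^t 3 d_2) = 1$ for all admissible $j$, so $2^t 3 d_2 = k n = k 2^t d_1 d_2$, i.e. $3 = k d_1$; since $3 \mid d_1$ we get $d_1 = 3$, $k = 1$. From $c - 1 = 2^{t-1}\ell_2 < n/2 = 2^{t-1} \cdot 3 \cdot d_2$ we get $\ell_2 < 3 d_2$, and since $\ell_2$ is an odd multiple of $d_2$ this forces $\ell_2 = d_2$. Thus $n = 2^t \cdot 3 \cdot d_2$ and $c - 1 = 2^{t-1} d_2$, whence $c + 1 = 2^{t-1} d_2 + 2 = 2(2^{t-2} d_2 + 1)$.

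Finally, since $3 \mid d_1$ we have $3 \mid c + 1$, so $3$ divides $2^{t-2} d_2 + 1$. Now the parity-of-$t$ analysis: if $t$ is even then $2^{t-2} \equiv 1 \pmod 3$, giving $d_2 \equiv -1 \pmod 3$, so $d_2 = 6k - 1$ for some positive integer $k$; if $t$ is odd then $2^{t-2} \equiv -1 \pmod 3$, giving $d_2 \equiv 1 \pmod 3$, so $d_2 = 6k + 1$ for some nonnegative integer $k$. In either case $d_2 = 6k - (-1)^t$, hence $n = 3 \cdot 2^t(6k - (-1)^t)$ and $c = 2^{t-1}(6k - (-1)^t) + 1$, which is exactly the family in part (iv) of Theorem~\ref{thm:val5} (with the constraint $c \ge 2$ automatic from $1 < c < n/2$). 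This completes the proof. The only slightly delicate point — and the main thing to get right — is the bookkeeping of which of $c \pm 1$ carries the $2$-part and which carries the factor $3$, and making sure the final congruence on $d_2$ matches the $(-1)^t$ appearing in family (iv) rather than family (iii); everything else is a routine transcription of Proposition~\ref{prop:type23+} with $c+1 \leftrightarrow c-1$ and $d_1 \leftrightarrow d_2$ swapped in the appropriate places.
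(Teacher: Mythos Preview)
Your proposal is correct and follows essentially the same approach as the paper's own proof: the same case split on whether $3\mid d_2$ or $3\mid d_1$, the same use of Proposition~\ref{prop:eigenspace} applied to $2^t d_2$ (respectively $2^t\cdot 3\cdot d_2$) to derive a contradiction (respectively to force $d_1=3$ and $\ell_2=d_2$), and the same final congruence analysis on $2^{t-2}d_2+1$ modulo $3$. Your closing observation that both parity cases can be written uniformly as $d_2=6k-(-1)^t$ is a nice touch not made explicit in the paper.
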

\begin{proof}
Recall that we have $n=2^td_1d_2$ and that $t=\beta+1$. Therefore $c-1=2^{t-1} d_2 m_2$. Recall also that by Lemma \ref{lem:one}(ii), an admissible $j$ of  type $3^-$ is of the form $j=d_1(2s+1)$ for some integer $s$, and that an admissible $j$ of type $2$ either $n/3$ or $2n/3$. As $n$ is divisible by $3$ and $d_1, d_2$ are relatively prime, we have that $3$ divides exactly one of $d_1, d_2$. Assume first that $3 | d_2$. Similarly, as in the proof of Proposition \ref{prop:type23+} we find that for every admissible character $\chi_j$ we have $\chi_j(2^t d_2)=1$, and so Proposition~\ref{prop:eigenspace} implies that $d_1=1$.  However, since $d_2$ is a divisor of $\ell_2$,
	$$
	c-1=2^{t-1}\ell_2< \frac{n}{2}=2^{t-1}d_2
	$$
	yields a contradiction.  
	
	Now, suppose that $3\mid d_1$. Similarly, as in the proof of Proposition \ref{prop:type23+} we see that for any admissible character $\chi_j$ we have that $\chi_{j}(2^t 3 d_2)=1$. Proposition~\ref{prop:eigenspace} implies that $2^t3d_2=kn=k2^td_1d_2$ for some $k\in\mathbb{Z}$. Therefore, $3=kd_1$, and since $3\mid d_1$ we must have $d_1=3$ and $k=1$. As $c-1=2^{t-1}\ell_2 < \frac{n}{2}=2^{t-1}3d_2$, we have that $\ell_2< 3d_2$. As $\ell_2$ is an odd multiple of $d_2$, this shows that $ \ell_2=d_2$. Summarizing all together, we have $n=2^t 3 d_2$, $c-1=2^{t-1} d_2$, and so $c+1=2^{t-1} d_2+2=2(2^{t-2} d_2+1)$ for some odd $d_2$, which is not divisible by $3$. But as $3|d_1$, we have that $3|c+1$, and so $3$ is a divisor of $2^{t-2} d_2 + 1$. If $t$ is even, then we have that $2^{t-2}\equiv1 \pmod{3}$, and so $d_2\equiv -1 \pmod{3}$. As $d_2$ is odd, $d_2=6k-1$ for some positive integer $k$. If $t$ is odd, then we have that $2^{t-2}\equiv-1 \pmod{3}$, and so $d_2\equiv 1 \pmod{3}$. As $d_2$ is odd, $d_2=6k+1$ for some nonnegative integer $k$. This shows that $\Ga$ is isomorphic to a graph belonging to the family described in part (iv) of Theorem \ref{thm:val5}.
\end{proof}

 \section{Proof of Theorem \ref{thm:val5} - A sufficient condition}
 In this section we prove that graphs described in the statement of  Theorem \ref{thm:val5} are indeed closed distance magic.
 
 Assume first that $\Ga=\Cay(\ZZ_n;\{\pm 1, \pm (n/2-1), n/2\})$ is a graph described in part (i) of  Theorem \ref{thm:val5}. Label the vertices of $\Ga$ as follows. For $0 \le x \le n/2-1$ let $\ell(x)=x+1$. For $n/2 \le x \le n-1$ let $\ell(x) = 3n/2 - x$. It is now easy to see that $\ell$ is a closed distance magic labeling of $\Ga$.
 
 Assume next that $\Ga=\Cay(\ZZ_n;\{\pm 1, \pm c, n/2\})$ is a graph described in part (ii) of  Theorem \ref{thm:val5}. Recall that in this case $n \equiv 2 \pmod{4}$, $c$ is even, and $2(c^2-1)$ is an odd multiple of $n$. By \cite[Theorem 1.1]{miklavivc2021classification}, graph $\Ga'=\Cay(\ZZ_n;\{\pm 1, \pm c\})$ is a tetravalent distance magic graph. Also note that distance magic labeling $\ell$ of the vertices of graph $\Ga'$, given in \cite[Proposition 4.2]{miklavivc2021classification}, has the property that for every $x \in \ZZ_n$, we have $\ell(x+n/2) = n+1-\ell(x)$. It follows that $\ell(x) + \ell(x+n/2) = n+1$ holds for every $x \in \ZZ_n$, and so $\ell$ is also a closed distance magic labeling of the vertices of graph $\Ga$.
 
 Assume next that $\Ga=\Cay(\ZZ_n;\{\pm 1, \pm c, n/2\})$ is a graph described in part (iii) of  Theorem \ref{thm:val5}. Recall that in this case $n=3 \cdot 2^t(6k+(-1)^t)$ and $c=2^{t-1}(6k+(-1)^t)-1$ for some integer $t \ge 2$ and some integer $k \ge 0$ such that $c \ge 2$. To define the labeling of vertices of $\Ga$, we split the vertex set of $\Ga$ into cosets of the subgroup $H$ of $\ZZ_n$, generated by $n/6$. For every $0 \le k \le n/6-1$, let $C_{3k}$ be the (ordered) coset defined by
 $$
   C_{3k} = 3k+H = \{3k, 3k+n/6, 3k+2n/6, 3k+3n/6, 3k+4n/6, 3k+5n/6\}.
 $$
 As $n$ is not divisible by $9$ and $0 \le k \le n/6-1$, the cosets are pairwise disjoint, and so every $x \in \ZZ_n$ belongs to exactly one of these cosets. Now define the labeling $\ell$ of the vertices of $\Ga$ as follows: for $0 \le k \le n/6-1$, let
 $$
   \ell(3k)= 1+3k, \qquad \ell(3k+2n/6)=3+3k, \qquad \ell(3k+4n/6)=2+3k,
 $$
 and
 $$
 \ell(3k+n/6)= n-1-3k, \quad \ell(3k+3n/6)=n-2-3k, \quad \ell(3k+5n/6)=n-3k.
 $$ 
 Observe that $\ell$ is a bijection from $\ZZ_n$ to $\{1,2, ..., n\}$. It remains to prove that for every $x \in \ZZ_n$ we have that
 \begin{equation}
 \label{eq:lab}
   \ell(x) + \ell(x+n/2) + \ell(x-1) + \ell(x+1) + \ell(x-c) + \ell(x+c) = 3(n+1).
 \end{equation}
 The key observation is that $x, x+n/2$ belong to the same coset of $H$, and since $c=n/6-1$, also $x+1,x-c$ and $x-1,x+c$ belong to the same coset of $H$. We consider the case $x=3k$ for some $0 \le k \le n/6-1$ in details; the other cases are treated similarly and are therefore left to the reader. 
 
 If $x=3k$ for some $0 \le k \le n/6-1$, then we have $\ell(x) + \ell(x+n/2) = 1+3k+n-2-3k=n-1$. Furthermore, $x-1 \equiv -1 \pmod{3}$ and $x+1 \equiv 1 \pmod 3$. Observe that $3n/6 \equiv 0 \pmod{3}$, and as $2^t \equiv (-1)^t \pmod{3}$ for every positive integer $t$, we also have
 $$
   {n \over 6} \equiv -1 \pmod{3}, \quad  {2n \over 6} \equiv 1 \pmod{3}, \quad  {4n \over 6} \equiv -1 \pmod{3},  \quad  {5n \over 6} \equiv 1 \pmod{3}.
 $$
 It follows from the above comments that $x-1=3k_1 + n/6$ or $x-1=3k_1+4n/6$ for some $0 \le k_1 \le n/6-1$, and that $x+1=3k_2 + 2n/6$ or $x+1 = 3k_2+5n/6$ for some $0 \le k_2 \le n/6-1$. This gives us four different cases that has to be considered. 
 
 If $x-1=3k_1+n/6$, then $x+c=3k_1+2n/6$, and so $\ell(x-1) + \ell(x+c) = n-1-3k_1+3+3k_1=n+2$.  If however $x-1=3k_1+4n/6$, then $x+c=3k_1+5n/6$, and so $\ell(x-1) + \ell(x+c) = 2+ 3k_1+n-3k_1=n+2$.  If $x+1=3k_2+2n/6$, then $x-c=3k_1+n/6$, and so $\ell(x+1) + \ell(x-c) = 3+3k_2+n-1-3k_2=n+2$.  If however $x+1=3k_2+5n/6$, then $x-c=3k_2+4n/6$, and so $\ell(x+1) + \ell(x-c) = n- 3k_2+2+3k_2=n+2$. Therefore, in all four cases we have that  \eqref{eq:lab} holds. This shows that $\ell$ is a closed distance magic labeling of the vertices of $\Ga$.
 
 \begin{figure}[t]
 	\centering
 	\includegraphics[width=7cm]{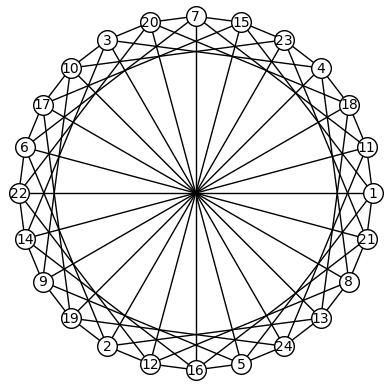}
 	\caption{Closed distance magic labeling for $\Cay(\ZZ_{24};\{\pm 1,\pm 5,12\})$.}
 	\label{fig11}
 \end{figure}
 
 Assume finally that $\Ga=\Cay(\ZZ_n;\{\pm 1, \pm c, n/2\})$ is a graph described in part (iv) of  Theorem \ref{thm:val5}. Then it turns out that the same labeling of the vertices of $\Ga$ as in the previous case is a closed distance magic labeling of the vertices of $\Ga$. The proof is similar and therefore we left the details to the reader. See also Figure \ref{fig11} for a closed distance magic labeling of the circulant that corresponds to $k=0, t=3$ in part (iv) of  Theorem \ref{thm:val5}, namely $\Cay(\ZZ_{24};\{\pm 1,\pm 5,12\})$.
 
 \subsection*{Acknowledgment}
 Blas Fernández's work is supported in part by the Slovenian Research Agency (research program P1-0285, research projects J1-2451, J1-3001 and J1-4008, and Young Researchers Grant). \v{S}tefko  Miklavi\v{c}'s  research is supported in part by the Slovenian Research Agency (research program P1-0285 and research projects J1-1695, N1-0140, N1-0159, J1-2451, N1-0208, J1-3001, J1-3003, J1-4008 and J1-4084).
 Roghayeh Maleki and Sarobidy Razafimahatratra's research are supported in part by the Ministry of Education, Science and Sport of Republic of Slovenia (University of Primorska Developmental funding pillar).
  


\begin{thebibliography}{10}
	
	\bibitem{anholcer2016spectra}
	M.~Anholcer, S.~Cichacz and I.~Peterin.
	\newblock Spectra of graphs and closed distance magic labelings.
	\newblock {\em Discrete Math.}, {\bf 339}(7):1915--1923, 2016.
	
	\bibitem{AruFroKam11}
	S.~Arumugam, D.~Froncek and N.~Kamatchi.
	\newblock Distance magic graphs-a survey.
	\newblock {\em J. Indones. Math. Soc. Special edition}, 11--26, 2011.
	
	\bibitem{babai1979spectra}
	L.~Babai.
	\newblock Spectra of Cayley graphs.
	\newblock {\em J. Combin. Theory Ser. B}, {\bf 27}(2):180--189, 1979.
	
	\bibitem{CoxCro46}
	H.~S.~M. Coxeter and W.~J.~R. Crosby.
	\newblock Problems and solutions: Advanced problems: Solutions: 4136.
	\newblock {\em Amer. Math. Monthly}, {\bf 53}:103--107, 1946.
	
	\bibitem{dummit2004abstract}
	D.~S. Dummit and R.~M. Foote.
	\newblock {\em Abstract algebra}, volume~3.
	\newblock Wiley Hoboken, 2004.
	
	\bibitem{Gal_sur}
	J.~A. Gallian.
	\newblock A dynamic survey of graph labeling.
	\newblock {\em Electron J. Combin.}, {\bf 1}(DynamicSurveys):DS6, 2018.
	
	\bibitem{miklavivc2021classification}
	{\v{S}}.~Miklavi{\v{c}} and P.~{\v{S}}parl.
	\newblock Classification of tetravalent distance magic circulant graphs.
	\newblock {\em Discrete Math.}, {\bf 344}(11):112557, 2021.
	
	\bibitem{MS6}
	{\v{S}}.~Miklavi\v{c} and P.~\v{S}parl.
	\newblock On distance magic circulants of valency 6.
	\newblock {\em arXiv preprint arXiv:2203.09856}, 2022.
	
	\bibitem{Rao08}
	S.~B. Rao.
	\newblock Sigma graphs - a survey.
	\newblock {\em Labelings of Discrete Structures and Applications}, 
	135--140, 2008.
	
	\bibitem{CicFro16}
	D.~F. S.~Cichacz.
	\newblock Distance magic circulant graphs.
	\newblock {\em Discrete Math.}, {\bf 339}:84--94, 2016.
	
	\bibitem{Simanjuntak2013MagicLO}
	R.~Simanjuntak, M.~Elviyenti, M.~N. Jauhari, A.~S. Praja and I.~A. Purwasih.
	\newblock Magic labelings of distance at most 2.
	\newblock {\em arXiv preprint arXiv:1312.7633}, 2013.
	
\end{thebibliography}
 \end{document}